\numberwithin{equation}{section}
\theoremstyle{plain}
\newtheorem{theorem}{Theorem}[section]
\newtheorem{proposition}{Proposition}[section]
\newtheorem{corollary}{Corollary}[section]
\newtheorem{lemma}{Lemma}[section]
\theoremstyle{definition}
\newtheorem{definition}{Definition}[section]
\theoremstyle{remark}
\begin{document}

\title{A Nadel-type vanishing theorem concerning the asymptotic multiplier ideal sheaf}

\author{Jingcao Wu}
\email{jingcaowu13@fudan.edu.cn}
\address{School of Mathematical Sciences, Fudan University, Shanghai 200433, People's Republic of China}

\classification{32J25 (primary), 32L20 (secondary).}
\keywords{vanishing theorem, asymptotic multiplier ideal sheaf, pseudo-effective line bundle.}
\thanks{This research was supported by China Postdoctoral Science Foundation, grant 2019M661328.}

\begin{abstract}
In this paper we establish a Nadel-type vanishing theorem on a projective manifold $X$ concerning the asymptotic multiplier ideal sheaf.
\end{abstract}

\maketitle

\section{Introduction}
\label{sec:introduction}

The celebrated Nadel vanishing theorem says that
\begin{theorem}[(Nadel, \cite{Nad90})]
Let $X$ be a projective manifold of dimension $n$, and let $(L,\varphi)$ be a big line bundle on $X$. Then
\[
H^{q}(X,K_{X}\otimes L\otimes\mathscr{I}(\varphi))=0
\]
for $q>0$.
\end{theorem}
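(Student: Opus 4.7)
The plan is to prove this via Hörmander's $L^{2}$ estimates for the $\bar\partial$-operator, extended to singular Hermitian weights in the manner of Andreotti--Vesentini and Demailly. I would first fix a smooth Kähler form $\omega$ on $X$ (which exists because $X$ is projective) and reinterpret the sheaf $K_{X}\otimes L\otimes\mathscr{I}(\varphi)$ as the sheaf of germs of $L$-valued holomorphic $n$-forms that are locally square-integrable with respect to the weight $e^{-\varphi}$. A standard fine-resolution argument then identifies the cohomology of this sheaf with the cohomology of the global complex whose degree-$q$ term consists of $L$-valued $(n,q)$-forms $u$ satisfying $\int_{X}|u|^{2}_\omega\,e^{-\varphi}\,dV_\omega<\infty$, equipped with the distributional $\bar\partial$.

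The analytic heart of the proof is the Bochner--Kodaira--Nakano inequality combined with strict positivity of the curvature. Since $L$ is big, one may assume (possibly after replacing $\varphi$ by an equisingular variant) that $i\Theta_\varphi(L)\geq\epsilon\omega$ as currents, for some $\epsilon>0$. On $(n,q)$-forms with $q\geq 1$ the curvature commutator yields the pointwise bound $\langle[i\Theta_\varphi(L),\Lambda_\omega]u,u\rangle\geq q\epsilon|u|^{2}$, which is precisely the positivity hypothesis required by the $L^{2}$-existence theorem: every $\bar\partial$-closed $(n,q)$-form $u\in L^{2}(X,e^{-\varphi})$ admits $v$ with $\bar\partial v=u$ and $\int_{X}|v|^{2}e^{-\varphi}\,dV_\omega\leq (q\epsilon)^{-1}\int_{X}|u|^{2}e^{-\varphi}\,dV_\omega$. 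Via the sheaf-theoretic reinterpretation above, this vanishing of the weighted Dolbeault cohomology in positive degree is exactly the desired statement $H^{q}(X,K_{X}\otimes L\otimes\mathscr{I}(\varphi))=0$ for $q>0$.

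The principal obstacle is that the Bochner--Kodaira--Nakano identity in its classical form presupposes a smooth weight, whereas $\varphi$ is only quasi-plurisubharmonic and in general unbounded. I would handle this by a regularization scheme: approximate $\varphi$ from above by a decreasing sequence of less singular weights $\varphi_\nu$ (for instance by Demailly's approximation via Bergman kernels) whose curvatures are still bounded below by $(\epsilon-1/\nu)\omega$, apply the smooth $L^{2}$-existence theorem for each $\varphi_\nu$ to obtain solutions $v_\nu$ with uniform $L^{2}$-bounds, and extract a weak limit $v$. Monotone convergence of $e^{-\varphi_\nu}\nearrow e^{-\varphi}$ together with weak continuity of $\bar\partial$ ensures that $v$ lies in the weighted $L^{2}$-space for $\varphi$ and satisfies $\bar\partial v=u$ in the distributional sense. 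The bookkeeping around passage to the limit, in particular controlling that the limit still provides a solution with the correct integrability, is the most delicate step, but it is by now a standard part of the singular $L^{2}$-machinery.
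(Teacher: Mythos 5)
Your proposal is the standard H\"ormander--Demailly $L^{2}$ proof of Nadel vanishing, and in outline it is correct. Note, however, that the paper does not prove this statement at all: it is quoted from Nadel's original article as background. The closest the paper comes is Theorem \ref{t41}, a generalisation allowing a Nakano semi-positive twist, and there the argument is dual to yours: instead of solving $\bar{\partial}v=u$ with weighted $L^{2}$ estimates, the author represents each cohomology class by a $\Box_{0}$-harmonic form (via the regularised metrics $\varphi_{\varepsilon}$ and complete K\"ahler metrics $\tilde{\omega}_{l}$ on the complement of the singular set), feeds it into the Bochner--Kodaira identity (\ref{e32}), and concludes from strict curvature positivity that the harmonic representative must vanish. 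The two routes are equivalent through the $L^{2}$ Hodge decomposition (\ref{e31}); your $\bar{\partial}$-solving version is the more classical and self-contained one, while the harmonic-form version is what the paper needs anyway, since the same machinery later drives the injectivity theorem where one cannot simply kill the cohomology.

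Two points deserve more care in your write-up. First, the hypothesis should be read as saying that the given pair $(L,\varphi)$ has $i\Theta_{L,\varphi}\geqslant\epsilon\omega$ as currents; bigness of $L$ alone only guarantees the existence of \emph{some} such metric, and replacing $\varphi$ by a non-equisingular one changes $\mathscr{I}(\varphi)$ and hence the statement. Your parenthetical ``equisingular variant'' is the right instinct, but the cleanest reading is that no replacement is needed. Second, Demailly's approximations $\varphi_{\nu}$ are smooth only outside an analytic set $Z_{\nu}$, so the ``smooth $L^{2}$-existence theorem'' does not apply verbatim; one must work on $X\setminus Z_{\nu}$ with a complete K\"ahler metric and then use the $L^{2}$ bound to extend across $Z_{\nu}$ --- exactly the device the paper sets up in Section \ref{sec:forms}. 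With those provisos your limit-extraction argument (uniform bounds in each $L^{2}(e^{-\varphi_{\mu}})$, weak limits, monotone convergence) closes the proof correctly.
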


Here $\mathscr{I}(\varphi)$ refers to the multiplier ideal sheaf \cite{Nad90} associated to $\varphi$. This theorem can be seen as the analytic counterpart of the Kawamata--Viehweg vanishing theorem \cite{Kaw82,Vie82} in algebraic geometry, and has great applications. Then it is natural to generalise it to a pseudo-effective line bundle. One could refer to \cite{Cao14,Eno93,Hua19,Mat14,Mat15a,Mat15b,Mat18,WaZ19} and the references therein for several generalisations. In practice, we find that the lower bound of the order $q$ such that $H^{q}(X,K_{X}\otimes L\otimes\mathscr{I}(\varphi))=0$ usually depends on the numerical dimension $\textrm{nd}(L)$ or Iitaka dimension $\kappa(L)$ \cite{Laz04} of $L$. For example, in Theorem 1.1, $\kappa(L)=\textrm{nd}(L)=n$, and the lower bound is just $0=n-\kappa(L)$.

In this paper, we present the following Nadel-type vanishing theorem concerning the asymptotic multiplier ideal sheaf $\mathscr{I}(\|L\|)$ (see Sect.\ref{sec:asymptotic}).
\begin{theorem}\label{t11}
Let $X$ be a projective manifold of dimension $n$, and let $L$ be a pseudo-effective line bundle. Then we have
\[
   H^{q}(X,K_{X}\otimes L\otimes\mathscr{I}(\|L\|))=0
\]
for $q>n-\kappa(L)$.
\end{theorem}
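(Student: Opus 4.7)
The plan is to combine the standard algebraic description of $\mathscr{I}(\|L\|)$ via a log resolution of the base ideal of $|mL|$ with a Kawamata--Viehweg--Koll\'ar type vanishing for klt pairs, using the Iitaka fibration of $L$ to produce the dimensional bound $n-\kappa(L)$.

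First I would dispose of the trivial case: if $\kappa(L)=-\infty$, the range $q>n-\kappa(L)$ is vacuous and there is nothing to show, so I may assume $\kappa(L)\geq 0$. I then fix $m$ sufficiently divisible so that (a) the Kodaira map $\Phi_{|mL|}$ has image of dimension exactly $\kappa(L)$, and (b) the asymptotic multiplier ideal stabilises as $\mathscr{I}(\|L\|)=\mathscr{I}\bigl(\tfrac{1}{m}|mL|\bigr)$.

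Second, I would pick a log resolution $\mu:\tilde X\to X$ of the base ideal $\mathfrak{b}(|mL|)$ and write $\mu^{*}|mL|=|M_m|+F_m$ with $|M_m|$ base point free and $F_m$ an effective simple normal crossings divisor. The formula
\[
  \mathscr{I}(\|L\|)=\mu_{*}\mathscr{O}_{\tilde X}\bigl(K_{\tilde X/X}-\lfloor F_m/m\rfloor\bigr),
\]
together with the local vanishing of $R^{j}\mu_{*}$ of the same sheaf for $j>0$, reduces the problem via the projection formula and the Leray spectral sequence to
\[
  H^{q}\bigl(\tilde X,\;\omega_{\tilde X}\otimes\mu^{*}L\otimes\mathscr{O}_{\tilde X}(-\lfloor F_m/m\rfloor)\bigr)=0\qquad\text{for }q>n-\kappa(L).
\]

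Third, the line bundle on the left is $\mathbb{Q}$-linearly equivalent to $K_{\tilde X}+\Delta+\tfrac{1}{m}M_m$, where $\Delta:=\{F_m/m\}$ is a klt boundary (coefficients all $<1$ with SNC support) and $\tfrac{1}{m}M_m$ is a semi-ample $\mathbb{Q}$-divisor of Iitaka dimension $\kappa(L)$. Let $\phi:\tilde X\to Y$ be the morphism defined by $|M_m|$, with $\dim Y=\kappa(L)$ and $M_m=\phi^{*}A$ for an ample line bundle $A$ on $Y$. I would then assemble the target vanishing from Koll\'ar's relative vanishing upstairs (giving $R^{j}\phi_{*}=0$ on the relevant twist for $j>n-\kappa(L)$, the fibre dimension) together with Serre--Koll\'ar vanishing downstairs (giving $H^{p}(Y,\cdot\otimes A)=0$ for $p>0$), via the Leray spectral sequence of $\phi$.

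The step I expect to be the main obstacle is this final vanishing upstairs: one needs a form of Koll\'ar's theorem adapted to the pair $(\tilde X,\Delta)$ and to the integer divisor $\lfloor F_m/m\rfloor$ attached to the klt boundary $\{F_m/m\}$. Given the strongly analytic orientation of the paper and of its references (Cao, Enoki, Matsumura, Wang--Zhou, and others), I would expect the author's actual proof to bypass this algebraic packaging by working directly on $X$ with a singular Hermitian metric of analytic singularities produced from sections of $|mL|$, applying a Nadel--Demailly $L^{2}$ estimate which exploits the generic positivity of the associated curvature current along the fibres of the Iitaka map.
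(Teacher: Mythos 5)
Your proposal is correct in outline, but it takes a genuinely different route from the paper. The paper never passes to a log resolution or to the Iitaka fibration: following Matsumura's strategy, it first proves an analytic injectivity theorem (Theorem \ref{t12}, via a harmonic theory for the singular metrics attached to the asymptotic multiplier ideals and their regularising sequences), which shows that if $H^{q}(X,K_{X}\otimes L\otimes\mathscr{I}(\|L\|))\neq0$ then multiplication by a basis of $H^{0}(X,L^{k-1})$ produces $h^{0}(L^{k-1})$ linearly independent classes in $H^{q}(X,K_{X}\otimes L^{k}\otimes\mathscr{I}(\|L^{k}\|))$; since $h^{0}(L^{k-1})$ grows like $k^{\kappa(L)}$ while Theorem \ref{t13} bounds $h^{q}(L^{k}\otimes\mathcal{G}\otimes\mathscr{I}(\|L^{k}\|))$ by $O(k^{n-q})$, one gets $\kappa(L)\leqslant n-q$, contradicting $q>n-\kappa(L)$. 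Your argument instead makes the bound $n-\kappa(L)$ appear as the generic fibre dimension of the resolved Iitaka map $\phi$ and assembles the vanishing from local vanishing, Leray, and the Koll\'ar package applied to $K_{\tilde X}+\Delta+\frac{1}{m}\phi^{*}A$; this is sound, but the step you rightly flag as the main obstacle must be quoted in its klt-pair form (Esnault--Viehweg, Ambro, Fujino): torsion-freeness of $R^{j}\phi_{*}$ of a divisor $\mathbb{Q}$-linearly equivalent to $K_{\tilde X}+\Delta$ over the base, which then vanishes for $j$ above the generic fibre dimension, together with the corresponding vanishing downstairs; passing to the Stein factorisation of $\phi$ is harmless and should be mentioned. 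What each approach buys: yours is shorter and makes the geometric origin of the bound transparent, granted that machinery; the paper's analytic route avoids the theory of pairs entirely, keeps the two ingredients as independent statements on compact K\"ahler manifolds, and reuses them essentially verbatim for the relative Theorem \ref{t15}. Your closing guess about the author's method is right in spirit (it is analytic), except that the actual proof is the injectivity-plus-asymptotic-estimate counting argument rather than a direct $L^{2}$ estimate along the Iitaka fibres.
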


In particular, if $L$ is nef and abundant (see Sect.\ref{sec:abundant}), we have
\begin{corollary}\label{c11}
Let $X$ be a projective manifold of dimension $n$, and let $L$ be a nef and abundant line bundle. Then we have
\[
   H^{q}(X,K_{X}\otimes L)=0
\]
for $q>n-\textrm{nd}(L)$.
\end{corollary}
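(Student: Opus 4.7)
The corollary should follow from Theorem \ref{t11} once the asymptotic multiplier ideal is identified with the structure sheaf. Concretely, since $L$ is nef it is pseudo-effective, and by the definition of ``abundant'' we have $\kappa(L) = \textrm{nd}(L)$. Theorem \ref{t11} therefore already gives
\[
H^q(X, K_X \otimes L \otimes \mathscr{I}(\|L\|)) = 0 \quad \text{for } q > n - \textrm{nd}(L),
\]
so it suffices to establish $\mathscr{I}(\|L\|) = \mathscr{O}_X$ whenever $L$ is nef and abundant.

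For this triviality I would invoke Kawamata's structural theorem for nef and abundant line bundles: after a birational modification $\mu : \tilde{X} \to X$ from a smooth projective $\tilde{X}$, one has $\mu^* L \sim_{\mathbb{Q}} f^* H + E$, where $f : \tilde{X} \to Y$ is a surjective morphism to a smooth projective variety $Y$, $H$ is an ample $\mathbb{Q}$-line bundle on $Y$, and $E$ is an effective $\mu$-exceptional $\mathbb{Q}$-divisor. Because $f^* H$ is semiample, its high multiples are base-point free on $\tilde{X}$, and hence $\mathscr{I}(\|f^* H\|) = \mathscr{O}_{\tilde{X}}$. Combining this with the behaviour of $\mathscr{I}(\|\cdot\|)$ under birational pullback, and using that $E$ is $\mu$-exceptional and effective so that its pushforward under $\mu$ is absorbed into $\mathscr{O}_X$, yields $\mathscr{I}(\|L\|) = \mathscr{O}_X$ and completes the reduction.

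The main obstacle is precisely the second step: the asymptotic multiplier ideal is an analytic-asymptotic object rather than a divisorial invariant, so the pullback/pushforward formulas must be applied with care. The technical heart of the argument is verifying that the effective $\mu$-exceptional divisor $E$ in the Kawamata decomposition does not cut out a nontrivial subsheaf of $\mathscr{O}_X$ when the triviality of $\mathscr{I}(\|f^* H\|)$ is transported from $\tilde{X}$ back down to $X$. Once that bookkeeping is in hand, the corollary is immediate from Theorem \ref{t11}.
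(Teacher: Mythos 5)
Your reduction is exactly the paper's: nef implies pseudo-effective, abundance gives $\kappa(L)=\textrm{nd}(L)$, apply Theorem \ref{t11}, and then identify $\mathscr{I}(\|L\|)$ with $\mathcal{O}_{X}$. The paper disposes of this last step in one line by quoting Russo's characterisation (Theorem 2.1 in Sect.~2.2, from [Rus09]): for $\kappa(L)\geqslant 0$ one has $\mathscr{I}(\|L^{k}\|)=\mathcal{O}_{X}$ for all $k$ if and only if $L$ is nef and abundant. What you propose is not a different route but an unpacking of that cited black box: the Kawamata structure theorem is precisely the engine of Russo's proof. If you do want to carry it out rather than cite it, two cautions. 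First, your statement of the decomposition is off: Kawamata gives a birational $\mu:\tilde{X}\rightarrow X$ and a fibration $f:\tilde{X}\rightarrow Y$ with $\mu^{\ast}L\sim_{\mathbb{Q}}f^{\ast}D_{0}$ for $D_{0}$ nef and \emph{big} on $Y$; to extract an ample part you must invoke Kodaira's lemma on $Y$, and the resulting effective divisor is pulled back from $Y$, not $\mu$-exceptional, so the bookkeeping you describe (absorbing a $\mu$-exceptional $E$ into $\mathcal{O}_{X}$ under pushforward) does not match the divisor you actually get. Second, the descent of the triviality of the asymptotic multiplier ideal from $\tilde{X}$ back to $X$ --- the step you yourself flag as the ``technical heart'' --- is the genuine content of Russo's argument and is not supplied in your sketch. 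As written, your proof is complete only modulo Theorem 2.1, which the paper already states; citing it directly is both sufficient and what the paper does.
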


Theorem \ref{t11} can be seen as a generalisation of the original Nadel vanishing theorem in the view of the following variant \cite{Laz04b} of Theorem 1.1.
\begin{theorem}[(=Theorem 11.2.12, \cite{Laz04b})]
Let $X$ be a projective manifold of dimension $n$, and let $L$ be a big line bundle on $X$. Then
\[
H^{q}(X,K_{X}\otimes L\otimes\mathscr{I}(\|L\|))=0
\]
for $q>0$.
\end{theorem}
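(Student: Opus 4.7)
The plan is to reduce the statement to the Kawamata--Viehweg vanishing theorem for a klt pair by passing to a log resolution on which $\mathscr{I}(\|L\|)$ can be computed explicitly. First I would fix an integer $p$ sufficiently large and divisible so that the family $\{\mathscr{I}(\tfrac{1}{k}|kL|)\}$ has stabilised, giving $\mathscr{I}(\|L\|) = \mathscr{I}(\tfrac{1}{p}|pL|)$. Then I would pick a common log resolution $\mu : X' \to X$ of the base ideal $\mathfrak{b}(|pL|)$, writing $\mu^{*}|pL| = |W| + P$ with $|W|$ basepoint-free and $P + \mathrm{Exc}(\mu)$ of simple normal crossings support. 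By the construction of $\mathscr{I}(\|L\|)$, one has $\mathscr{I}(\|L\|) = \mu_{*}\mathscr{O}_{X'}(K_{X'/X} - \lfloor \tfrac{1}{p}P \rfloor)$.

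Next I would invoke local vanishing (a relative Kawamata--Viehweg argument, exploiting that $\mu^{*}L - \tfrac{1}{p}P = \tfrac{1}{p}W$ is $\mu$-semiample) to obtain
\[
R^{i}\mu_{*}\mathscr{O}_{X'}\bigl(K_{X'} + \mu^{*}L - \lfloor \tfrac{1}{p}P \rfloor\bigr) = 0 \quad\text{for } i > 0.
\]
The Leray spectral sequence then degenerates to
\[
H^{q}\bigl(X, K_{X} \otimes L \otimes \mathscr{I}(\|L\|)\bigr) \cong H^{q}\bigl(X', K_{X'} + \Delta + \tfrac{1}{p}W\bigr),
\]
where $\Delta := \{\tfrac{1}{p}P\}$. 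The pair $(X', \Delta)$ is klt because $\Delta$ has snc support with coefficients in $[0,1)$, and the $\mathbb{Q}$-divisor $\tfrac{1}{p}W$ is nef (from basepoint-freeness of $|W|$) and big: the morphism defined by $|W|$ has the same image as the one defined by $|pL|$, so $\kappa(X', W) = \kappa(X, pL) = n$ because $L$ is big, and a nef divisor of maximal Iitaka dimension is big.

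The conclusion then follows from Kawamata--Viehweg vanishing for klt pairs: $H^{q}(X', K_{X'} + \Delta + \tfrac{1}{p}W) = 0$ for all $q > 0$. The main point requiring care in this plan is the local vanishing that collapses the Leray spectral sequence back to $X$; this is not so much a genuine obstacle as a foundational piece of the theory of asymptotic multiplier ideals developed in \cite{Laz04b}, and should already be available from the material in Section \ref{sec:asymptotic}.
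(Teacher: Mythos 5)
Your argument is correct, and every step is the standard one: stabilisation $\mathscr{I}(\|L\|)=\mathscr{I}(\frac{1}{p}|L^{p}|)$ for $p$ large and divisible, the decomposition $\mu^{\ast}|L^{p}|=|W|+P$ on a log resolution with $|W|$ free, local vanishing to collapse the Leray spectral sequence, and Kawamata--Viehweg for the klt pair $(X',\{\frac{1}{p}P\})$ twisted by the nef and big $\mathbb{Q}$-divisor $\frac{1}{p}W$. This is essentially the proof given in the cited source \cite{Laz04b}; the one ingredient not literally contained in Sect.\ref{sec:asymptotic}, namely the local vanishing $R^{i}\mu_{\ast}\mathcal{O}_{X'}(K_{X'/X}-\lfloor\frac{1}{p}P\rfloor)=0$ for $i>0$, is a relative Kawamata--Viehweg statement and is standard, so your closing caveat is harmless. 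Be aware, though, that the present paper does not reprove this statement at all: it quotes it from \cite{Laz04b} and instead derives the generalisation Theorem \ref{t11} by an entirely different, analytic route, combining the harmonic-theoretic injectivity theorem (Theorem \ref{t12}) with the asymptotic estimate $h^{q}(L^{k}\otimes\mathcal{G}\otimes\mathscr{I}(\|L^{k}\|))=O(k^{n-q})$ (Theorem \ref{t13}). The comparison is instructive: your algebraic argument is shorter and more elementary, but it leans essentially on the nef-and-big positivity of the moving part $W$ (equivalently on $\kappa(L)=n$), so it cannot reach a merely pseudo-effective $L$; the paper's machinery trades that simplicity for the general bound $q>n-\kappa(L)$, of which the statement above is the special case $\kappa(L)=n$.
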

On the other hand, when we deal with the asymptotic multiplier ideal sheaf instead of the multiplier ideal sheaf, the presentation of the vanishing result for a pseudo-effective line bundle is considerably simplified comparing with \cite{Cao14,Mat14,Mat15a,Mat15b}.

The proof of Theorem \ref{t11} uses the same strategy as \cite{Mat14}. We first prove an injectivity theorem and an asymptotic estimate for the order of the cohomology group as follows:

\begin{theorem}[(Injectivity theorem)]\label{t12}
Let $X$ be a compact K\"{a}hler manifold of dimension $n$. Let $L$ and $H$ be line bundles on $X$ with $\kappa(L)\geqslant0$ and $\kappa(H)\geqslant0$. Let $\varphi_{L}$ and $\varphi_{L\otimes H}$ be the singular metrics on $L$ and $L\otimes H$, which is associated to $\mathscr{I}(\|L\|)$ and $\mathscr{I}(\|L\otimes H\|)$ respectively (see Sect.\ref{sec:asymptotic}). Assume that $i\Theta_{L,\varphi_{L}}\geqslant\delta i\Theta_{L\otimes H,\varphi_{L\otimes H}}$ for some positive number $\delta$.

For a (non-zero) section $s$ of $H$, the multiplication map induced by the tensor product with $s$
\[
\Phi:H^{q}(X,K_{X}\otimes L\otimes\mathscr{I}(\|L\|))\rightarrow H^{q}(X,K_{X}\otimes L\otimes H\otimes\mathscr{I}(\|L\otimes H\|))
\]
is well-defined and injective for any $q\geqslant0$.
\end{theorem}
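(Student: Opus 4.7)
The plan is to follow the strategy of Matsumura \cite{Mat14}, adapted so that the multiplier ideal sheaves appearing throughout his proof are replaced by the asymptotic multiplier ideals $\mathscr{I}(\|L\|)$ and $\mathscr{I}(\|L\otimes H\|)$, with $\varphi_{L}$ and $\varphi_{L\otimes H}$ as the corresponding singular metrics of minimal singularities.

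First I would verify that $\Phi$ is well-defined at the sheaf level. If $f$ is a local section of $\mathscr{I}(\|L\|)$, then $|f|^{2}e^{-\varphi_{L}}$ is locally integrable, hence $|fs|^{2}e^{-(\varphi_{L}+\log|s|^{2})}=|f|^{2}e^{-\varphi_{L}}$ is locally integrable as well. Because $\varphi_{L}+\log|s|^{2}$ is a singular metric on $L\otimes H$ with positive curvature current, the minimal-singularity property of $\varphi_{L\otimes H}$ gives $\varphi_{L\otimes H}\geqslant\varphi_{L}+\log|s|^{2}-C$, so $fs\in\mathscr{I}(\|L\otimes H\|)$, and $\Phi$ descends to cohomology.

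Next I would realise both cohomology groups as spaces of $L^{2}$-harmonic forms. Let $Z\subset X$ contain the singular loci of $\varphi_{L}$ and $\varphi_{L\otimes H}$ together with the zero divisor of $s$, and equip $Y:=X\setminus Z$ with a complete K\"ahler metric $\omega$ via Fujiki's construction. By the de Rham--Weil type isomorphism exploited in \cite{Mat14} one then has
\[
H^{q}(X,K_{X}\otimes L\otimes\mathscr{I}(\|L\|))\simeq\mathcal{H}^{n,q}_{(2)}(Y,L;\omega,\varphi_{L}),
\]
and similarly for the target. To control the possibly wild singularities of $\varphi_{L}$ and $\varphi_{L\otimes H}$ I would employ Demailly's equisingular approximations $\varphi_{L,\epsilon}$ and $\varphi_{L\otimes H,\epsilon}$, smooth away from a thin analytic set, preserving the multiplier ideals, and carrying the curvature inequality $i\Theta_{L,\varphi_{L}}\geqslant\delta\,i\Theta_{L\otimes H,\varphi_{L\otimes H}}$ up to an $O(\epsilon)$-error.

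With this setup, pick a class annihilated by $\Phi$ and let $u$ be its harmonic representative. The core step, following Enoki and its refinement by Matsumura, is to show that $su$ is itself harmonic with respect to $\varphi_{L\otimes H}$: the identity $\bar{\partial}(su)=s\,\bar{\partial}u=0$ is immediate from the holomorphicity of $s$ and the harmonicity of $u$, while $D''^{*}(su)=0$ is obtained from the twisted Bochner--Kodaira--Nakano inequality applied to $su$, the curvature hypothesis ensuring that the resulting curvature term is non-negative. Since $[su]=0$ in the target cohomology, uniqueness of the harmonic representative forces $su\equiv 0$ on $Y$, and because $s$ is not identically zero this in turn forces $u\equiv 0$.

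The main obstacle is executing the harmonic argument rigorously in the singular setting: one must carry out the computations at the level of the smooth approximations $\varphi_{L,\epsilon}$, $\varphi_{L\otimes H,\epsilon}$, establish uniform $L^{2}$-bounds on the corresponding harmonic forms $u_{\epsilon}$ as $\epsilon\to 0$, and verify that the curvature condition survives regularisation with the correct sign so that the Bochner--Kodaira--Nakano term remains non-negative in the limit. Extracting a genuine harmonic limit $u$ out of the $u_{\epsilon}$ and transporting the relation $su_{\epsilon}=0$ through the limiting process will require the delicate comparison between the $\varphi_{L}$- and $\varphi_{L\otimes H}$-norms developed in \cite{Mat14}, and this is where the bulk of the technical work will lie.
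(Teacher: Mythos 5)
Your proposal follows essentially the same route as the paper's proof: both realise the two cohomology groups as spaces of limiting harmonic forms on $Y=X\setminus Z$ with a complete K\"ahler metric, using Demailly's equisingular approximations $\varphi_{L,\epsilon}$, $\varphi_{L\otimes H,\epsilon}$, then show via the (twisted) Bochner--Kodaira--Nakano identity and the hypothesis $i\Theta_{L,\varphi_{L}}\geqslant\delta\, i\Theta_{L\otimes H,\varphi_{L\otimes H}}$ that $s\alpha$ is again harmonic, so that $[s\alpha]=0$ forces $s\alpha\equiv0$ and hence $\alpha\equiv0$. The one point where your justification is looser than the paper's is the sheaf-level containment $s\cdot\mathscr{I}(\|L\|)\subseteq\mathscr{I}(\|L\otimes H\|)$ together with the norm comparison $\varphi_{L}+\log|s|^{2}\leqslant\varphi_{L\otimes H}+C$: the metric associated to $\mathscr{I}(\|L\otimes H\|)$ is an algebraic metric built from a linear series, not a metric with minimal singularities, so this inequality should be derived (as the paper does, via Proposition \ref{p21}(ii) and the comparison of base ideals on a common log resolution) rather than from a minimal-singularity property, although the inequality itself is correct.
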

Notice that the assumptions $\kappa(L)\geqslant0$ and $\kappa(H)\geqslant0$ are required in order to define $\mathscr{I}(\|L\|)$ and $\mathscr{I}(\|H\|)$. To my best acknowledgement, Theorem \ref{t12} cannot be obtained by directly applying the available injectivity theorems, such as those in \cite{Eno93,Fuj12,Ko86a,Mat14,Mat15a,Mat15b,Mat18}. The basic reason is that if $\varphi_{H}$ is the singular metric associated to $\mathscr{I}(\|H\|)$, in general we do not have $\varphi_{L}+\varphi_{H}=\varphi_{L\otimes H}$.

For any coherent sheaf $\mathcal{F}$, let $h^{q}(\mathcal{F})$ be the dimension of $H^{q}(X,\mathcal{F})$. Let $L^{k}$ be the $k$-th tensor product of a line bundle $L$. Then we have
\begin{theorem}[(Asymptotic estimate)]\label{t13}
Let $X$ be a projective manifold of dimension $n$, and let $L$ be a pseudo-effective line bundle on $X$. Then for any coherent sheaf $\mathcal{G}$ and $q\geqslant0$, we have
\[
    h^{q}(L^{k}\otimes\mathcal{G}\otimes\mathscr{I}(\|L^{k}\|))=O(k^{n-q}).
\]
\end{theorem}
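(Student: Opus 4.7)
The plan is to combine a uniform Nadel-type vanishing with induction on $n=\dim X$, cutting by hyperplane sections of a fixed very ample line bundle $A$. As a preliminary step, absorb the canonical bundle by writing $\mathcal{G}':=\mathcal{G}\otimes K_X^{-1}$, so the quantity to bound becomes $h^{q}(X, K_X\otimes L^{k}\otimes\mathcal{G}'\otimes\mathscr{I}(\|L^{k}\|))$; a locally free resolution of $\mathcal{G}'$ by direct sums of negative twists $A^{-m}$ (which exists by Castelnuovo--Mumford regularity and Hilbert's syzygy theorem), combined with a standard hypercohomology spectral sequence, further reduces to the case in which $\mathcal{G}'$ is a line bundle.

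The key input is the uniform vanishing $H^{q}(X, K_X\otimes L^{k}\otimes A^{n+1}\otimes\mathcal{G}'\otimes\mathscr{I}(\|L^{k}\|))=0$ for all $q>0$ and $k\geqslant 0$, valid for $A$ sufficiently very ample. This is the classical Nadel vanishing applied to the singular metric $\varphi_{L^{k}}+\varphi_{A^{n+1}}$ on $L^{k}\otimes A^{n+1}$, where $\varphi_{A^{n+1}}$ is a smooth strictly positive metric and $\varphi_{L^{k}}$ is the metric associated to $\mathscr{I}(\|L^{k}\|)$: the smooth factor supplies strictly positive curvature uniformly in $k$, while the multiplier ideal of the sum still equals $\mathscr{I}(\|L^{k}\|)$ because $\varphi_{A^{n+1}}$ is smooth.

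To descend from the $A^{n+1}$-twist, set $\mathcal{F}_k:=K_X\otimes L^{k}\otimes\mathcal{G}'\otimes\mathscr{I}(\|L^{k}\|)$, choose a general $D\in|A|$, and for $m=0,\ldots,n$ consider
\[
0\to \mathcal{F}_{k}\otimes A^{m}\to \mathcal{F}_{k}\otimes A^{m+1}\to \mathcal{F}_{k}\otimes A^{m+1}\big|_D\to 0.
\]
Iterating the long exact sequence reduces the problem to bounding the $n+1$ boundary contributions $h^{q-1}(D,\mathcal{F}_{k}\otimes A^{m+1}|_D)$. By adjunction $K_X\otimes A|_D\cong K_D$ and the generic restriction theorem for asymptotic multiplier ideals \cite{Laz04}, each such term is controlled by $h^{q-1}(D, K_D\otimes L^{k}|_D\otimes\widetilde{\mathcal{G}}\otimes\mathscr{I}(\|L^{k}|_D\|))$ for an auxiliary coherent sheaf $\widetilde{\mathcal{G}}$ on $D$, to which the inductive hypothesis applies and yields $O(k^{(n-1)-(q-1)})=O(k^{n-q})$. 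Summing over the $n+1$ contributions concludes.

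The main obstacle I foresee is the generic restriction step: Lazarsfeld's theorem only provides a one-sided inclusion $\mathscr{I}(\|L^{k}\|)|_D\subseteq\mathscr{I}(\|L^{k}|_D\|)$, so the cohomology of the cokernel must be separately controlled before the inductive hypothesis can be applied cleanly. A possible remedy is to show this cokernel is supported in dimension at most $n-2$, which allows a recursive application of the theorem at a lower level; alternatively, one may work on a log resolution of $\mathscr{I}(\|L^{k}\|)$ where the relevant restriction becomes an equality.
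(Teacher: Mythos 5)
Your overall strategy is the same as the paper's: the engine is the uniform twisted vanishing $H^{q}(X,K_{X}\otimes A^{m}\otimes\mathcal{G}'\otimes L^{k}\otimes\mathscr{I}(\|L^{k}\|))=0$ for $q>0$ and all $k,m$, which is the paper's Theorem \ref{t41}; your derivation of it via Nadel vanishing for the metric $\varphi_{L^{k}}+\varphi_{A^{n+1}}$ is the same mechanism (the $L^{k}$-factor contributes nonnegative curvature uniformly in $k$, the smooth ample factor supplies strict positivity without changing the multiplier ideal) and is fine. Where you diverge is the descent from this vanishing to the bound $O(k^{n-q})$: the paper simply invokes Matsumura's Lemma 4.3 as a black box, while you re-derive it by induction on dimension via hyperplane sections. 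That re-derivation is where your argument has a genuine gap.

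The problem is the step where you replace the boundary term $(\cdots)\otimes\mathscr{I}(\|L^{k}\|)\cdot\mathcal{O}_{D}$ by $(\cdots)\otimes\mathscr{I}(\|L^{k}|_{D}\|)$, which you yourself flag but do not repair. The inclusion $\mathscr{I}(\|L^{k}\|)\cdot\mathcal{O}_{D}\subseteq\mathscr{I}(\|L^{k}|_{D}\|)$ (valid for very general $D$, and note $D$ must be fixed independently of $k$, so "general" accumulates countably many conditions) goes the wrong way: to bound the cohomology of the subsheaf you must bound $h^{q-2}$ of the quotient, whose support and growth in $k$ you do not control --- the two ideals can differ along a divisor of $D$ wherever $L^{k}|_{D}$ has sections that do not extend to $X$, and the quotient is not of a shape to which the inductive hypothesis applies. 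Neither proposed remedy is carried out, and the log-resolution one faces the added difficulty that the resolution depends on $k$. The correct fix --- which is exactly what Matsumura's lemma formalizes --- is to never re-interpret the restricted ideal as an intrinsic asymptotic multiplier ideal on $D$: treat $\mathcal{I}_{k}:=\mathscr{I}(\|L^{k}\|)$ as an abstract family of ideal sheaves, keep $(\mathcal{F}_{k}\otimes A^{m+1})|_{D}$ as a coherent sheaf on $D$, and verify the inductive (vanishing) hypothesis on $D$ directly by sandwiching $H^{q}(D,A^{m}\otimes(\cdots)|_{D})$ between $H^{q}(X,A^{m}\otimes(\cdots))=0$ and $H^{q+1}(X,A^{m-1}\otimes(\cdots))=0$ in the long exact sequence. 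With that modification your induction closes and becomes a proof of the lemma the paper cites. (A smaller shared caveat: tensoring a locally free resolution of $\mathcal{G}'$ with the non-flat sheaf $\mathscr{I}(\|L^{k}\|)$ loses exactness, so the reduction to line bundles needs the usual descending induction on $q$ rather than a literal hypercohomology spectral sequence; the paper is equally terse on this point.)
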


Combining with Theorems \ref{t12} and \ref{t13}, we then finish the proof of Theorem \ref{t11}. The details are provided in the text.

Eventually, we present a relative version of Theorem \ref{t11}.
\begin{theorem}\label{t15}
Let $f:X\rightarrow Y$ be a surjective morphism between projective manifolds, and let $L$ be a pseudo-effective line bundle on $X$. Let $l$ be the dimension of a general fibre $F$ of $f$. Then
\[
R^{q}f_{\ast}(K_{X}\otimes L\otimes\mathscr{I}(f,\|L\|))=0
\]
for $q>l-\kappa(L,f)$. Here $\mathscr{I}(f,\|L\|)$ is the relative version of the asymptotic multiplier ideal sheaf (see Sect.\ref{sec:asymptotic}) and $\kappa(L,f)$ is the relative Iitaka dimension (see Sect.\ref{sec:iitaka}).
\end{theorem}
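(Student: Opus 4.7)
The plan is to reduce the relative vanishing to the absolute Theorem~\ref{t11} by twisting with a sufficiently ample line bundle pulled back from $Y$ and then descending via the Leray spectral sequence. Fix a very ample line bundle $A$ on $Y$. For an integer $m\gg 0$, Serre's theorem ensures that $R^{p}f_{\ast}(K_{X}\otimes L\otimes\mathscr{I}(f,\|L\|))\otimes A^{m}$ is generated by its global sections for every $p$ and that its higher cohomology on $Y$ vanishes. The Leray spectral sequence therefore collapses to the column $p=0$ and yields
\[
H^{0}\bigl(Y,\,A^{m}\otimes R^{q}f_{\ast}(K_{X}\otimes L\otimes\mathscr{I}(f,\|L\|))\bigr)\cong H^{q}\bigl(X,\,K_{X}\otimes L\otimes f^{\ast}A^{m}\otimes\mathscr{I}(f,\|L\|)\bigr).
\]

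The crucial technical input is the identification
\[
\mathscr{I}(f,\|L\|)=\mathscr{I}(\|L\otimes f^{\ast}A^{m}\|)
\]
for $m$ large enough. This reflects the principle that twisting by a sufficiently positive pullback from the base converts relative asymptotic base-locus data into absolute ones, since by the projection formula the sections of $k(L\otimes f^{\ast}A^{m})$ are already detected by $f_{\ast}(kL)\otimes A^{km}$ at a general point of $Y$. Granting this identification, the right-hand side of the Leray isomorphism becomes
\[
H^{q}\bigl(X,\,K_{X}\otimes(L\otimes f^{\ast}A^{m})\otimes\mathscr{I}(\|L\otimes f^{\ast}A^{m}\|)\bigr),
\]
to which Theorem~\ref{t11} applies and which therefore vanishes for $q>n-\kappa(L\otimes f^{\ast}A^{m})$.

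To convert this into the desired range, I would invoke the easy addition theorem for Iitaka dimensions: since $A^{m}$ is big on $Y$ and $f$ is surjective, $\kappa(L\otimes f^{\ast}A^{m})\geqslant\kappa(L,f)+\dim Y=\kappa(L,f)+(n-l)$, so $n-\kappa(L\otimes f^{\ast}A^{m})\leqslant l-\kappa(L,f)$. Consequently the absolute vanishing above holds whenever $q>l-\kappa(L,f)$. Since $A^{m}\otimes R^{q}f_{\ast}(K_{X}\otimes L\otimes\mathscr{I}(f,\|L\|))$ is globally generated by the first step, the vanishing of its global sections forces the sheaf itself to vanish, proving Theorem~\ref{t15}.

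The main obstacle is the ideal-comparison identity $\mathscr{I}(f,\|L\|)=\mathscr{I}(\|L\otimes f^{\ast}A^{m}\|)$ in the analytic/singular-metric framework used throughout the paper: one must verify that the singular metric on $L\otimes f^{\ast}A^{m}$ built from its own pluricanonical sections induces, at a general point of $Y$ and restricted to the fibre, the same ideal as the relative construction of Sect.~\ref{sec:asymptotic}. This amounts to translating Lazarsfeld's algebraic treatment of relative asymptotic multiplier ideals into the language of plurisubharmonic weights; once it is in place, the rest of the argument is a routine Serre-vanishing plus Leray reduction as outlined above.
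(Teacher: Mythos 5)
Your route is genuinely different from the paper's: instead of developing the relative harmonic theory (the relative injectivity Theorem \ref{t61} and the relative asymptotic estimate Theorem \ref{t62}, followed by the same contradiction argument as in the absolute case), you reduce to the absolute Theorem \ref{t11} by twisting with $f^{\ast}A^{m}$ and descending through Leray plus Serre vanishing. The reduction skeleton is sound: the degeneration of the Leray spectral sequence, the projection formula, and the final step (a globally generated sheaf with no global sections is zero) are all fine. The step you flag as the main obstacle, $\mathscr{I}(f,\|L\|)=\mathscr{I}(\|L\otimes f^{\ast}A^{m}\|)$ for $m\gg0$, is in fact the easier of your two deferred claims: choosing $k$ so that $\mathscr{I}(f,\|L\|)=\mathscr{I}(f,\frac{1}{k}|L^{k}|)$ and then $m$ so that $f_{\ast}L^{k}\otimes A^{km}$ is globally generated makes the absolute base ideal of $|L^{k}\otimes f^{\ast}A^{km}|$ coincide with the relative base ideal of $|L^{k}|$, giving one inclusion; the other inclusion holds for every $k'$ because absolute sections form a subspace of relative ones. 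This is the standard relative-versus-absolute comparison in Lazarsfeld's book and can be quoted.

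The genuine gap is the inequality $\kappa(L\otimes f^{\ast}A^{m})\geqslant\kappa(L,f)+\dim Y$, which you attribute to ``the easy addition theorem.'' Easy addition gives the \emph{opposite} bound, $\kappa(M)\leqslant\kappa(M|_{F})+\dim Y$; what you need is a superadditivity statement, which without the twist by $f^{\ast}A^{m}$ is essentially the (open) Iitaka conjecture. Concretely, $h^{0}(X,(L\otimes f^{\ast}A^{m})^{k})=h^{0}(Y,f_{\ast}L^{k}\otimes A^{km})$, and to extract growth of order $k^{\kappa(L,f)+\dim Y}$ you must produce, \emph{uniformly in $k$}, enough global sections of $f_{\ast}L^{k}\otimes A^{km}$ out of the rank of $f_{\ast}L^{k}$; a single $m$ must simultaneously compensate the negativity of the whole sequence $\{f_{\ast}L^{k}\}_{k}$, and nothing in your argument controls this. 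The inequality is nevertheless true for $m\gg0$: it is Fujita's lemma on fibre spaces (if $\kappa(X,aD-f^{\ast}H)\geqslant0$ for some $a>0$ and some ample $H$ on $Y$, then $\kappa(X,D)=\kappa(X_{\eta},D_{\eta})+\dim Y$), applied to $D=L\otimes f^{\ast}A^{m}$, together with the identification of $\kappa(L,f)$ with the Iitaka dimension of $L$ on the generic fibre. If you cite or prove that lemma, your reduction closes and yields a shorter, purely algebraic proof of Theorem \ref{t15}; as written, the key numerical step rests on a theorem that points the wrong way. Note also that the paper's approach, while heavier, produces extra information along the way, namely the torsion-freeness of $R^{q}f_{\ast}(K_{X}\otimes L\otimes\mathscr{I}(f,\|L\|))$ from Theorem \ref{t61}, which your reduction does not recover.
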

We remark here that Theorem \ref{t15} cannot be easily obtained by applying Theorems \ref{t12} and \ref{t13} on the general fibre.

The plan of this paper is as follows. In Sect.\ref{sec:preliminary} we give a brief introduction on all the background materials including the asymptotic multiplier ideal sheaf, abundant line bundle and so on. In Sect.\ref{sec:harmonic} we develop the harmonic theory and prove Theorem \ref{t12}. In Sect.\ref{sec:estimate} we prove the asymptotic estimate, i.e. Theorem \ref{t13}. Then we prove Theorem \ref{t11} in next section. In the final section we extend everything to the relative setting and prove Theorem \ref{t15}.

\section{Preliminary}
\label{sec:preliminary}
In this section we will introduce some basic materials. For clarity and for convenience of later reference, it will be done in a general setting, i.e. on a K\"{a}hler manifold.

Let $(X,\omega)$ be a compact K\"{a}hler manifold, and let $L$ be a pseudo-effective line bundle on $X$.

\subsection{The asymptotic multiplier ideal sheaf}
\label{sec:asymptotic}
This part is mostly collected from \cite{Laz04b}.

Recall that for an arbitrary ideal sheaf $\mathfrak{a}\subset\mathcal{O}_{X}$, the associated multiplier ideal sheaf is defined as follows: let $\mu:\tilde{X}\rightarrow X$ be a smooth modification such that $\mu^{\ast}\mathfrak{a}=\mathcal{O}_{\tilde{X}}(-E)$, where $E$ has simple normal crossing support. Then given a positive real number $c>0$ the multiplier ideal sheaf is defined as
\[
\mathscr{I}(c\cdot\mathfrak{a}):=\mu_{\ast}\mathcal{O}_{\tilde{X}}(K_{\tilde{X}/X}-\lfloor cE\rfloor).
\]
Here $K_{\tilde{X}/X}$ is the relative canonical divisor and $\lfloor E\rfloor$ means the round-down.

Now assume that $\kappa(L)\geqslant0$. Fix a positive real number $c>0$. For $k>0$ consider the complete linear series $|L^{k}|$, and form the multiplier ideal sheaf
\[
\mathscr{I}(\frac{c}{k}|L^{k}|)\subseteq\mathcal{O}_{X},
\]
where $\mathscr{I}(\frac{c}{k}|L^{k}|):=\mathscr{I}(\frac{c}{k}\cdot\mathfrak{a}_{k})$ with $\mathfrak{a}_{k}$ being the base-ideal of $|L^{k}|$. It is not hard to verify that for every integer $p\geqslant1$ one has the inclusion
\[
\mathscr{I}(\frac{c}{k}|L^{k}|)\subseteq\mathscr{I}(\frac{c}{pk}|L^{pk}|).
\]
Therefore the family of ideals
\[
\{\mathscr{I}(\frac{c}{k}|L^{k}|)\}_{(k\geqslant0)}
\]
has a unique maximal element from the ascending chain condition on ideals.

\begin{definition}\label{d21}
The asymptotic multiplier ideal sheaf associated to $c$ and $|L|$,
\[
\mathscr{I}(c\|L\|)
\]
is defined to be the unique maximal member among the family of ideals $\{\mathscr{I}(\frac{c}{k}|L^{k}|)\}$.
\end{definition}

By definition, $\mathscr{I}(c\|L\|)=\mathscr{I}(\frac{c}{k}|L^{k}|)$ for some $k$. Let $u_{1},...,u_{m}$ be a basis of $H^{0}(X,L^{k})$, then the base-ideal of $|L^{k}|$ is just $\mathcal{I}(u_{1},...,u_{m})$. Let $\varphi=\log(|u_{1}|^{2}+\cdots+|u_{m}|^{2})$, which is a singular metric on $L^{k}$. We verify that
\[
\mathscr{I}(\frac{c}{k}|L^{k}|)=\mathscr{I}(\frac{c}{k}\varphi).
\]
Indeed, let $\mu:\tilde{X}\rightarrow X$ be the smooth modification such that $\mu^{\ast}\mathcal{I}(u_{1},...,u_{m})=\mathcal{O}_{\tilde{X}}(-E)$, where $E$ has simple normal crossing support. Then it is computed in \cite{Dem12} that
\[
\mathscr{I}(\frac{c}{k}\varphi)=\mu_{\ast}\mathcal{O}_{\tilde{X}}(K_{\tilde{X}/X}-\lfloor \frac{c}{k}E\rfloor),
\]
which coincides with the definition of $\mathscr{I}(\frac{c}{k}|L^{k}|)$. In summary, we have
\[
\mathscr{I}(c\|L\|)=\mathscr{I}(\frac{c}{k}\varphi),
\]
and $\frac{1}{k}\varphi$ is called the singular metric on $L$ associated to $\mathscr{I}(\|L\|)$.

Next, we introduce the relative variant. Let $f:X\rightarrow Y$ be a surjective morphism between projective manifolds, and $L$ a line bundle on $X$ whose restriction to a general fibre of $f$ has non-negative Iitaka dimension. Then there is a naturally defined homomorphism
\[
\rho:f^{\ast}f_{\ast}L\rightarrow L.
\]
Let $\mu:\tilde{X}\rightarrow X$ be a smooth modification of $|L|$ with respect to $f$, having the property that the image of the induced homomorphism
\[
\mu^{\ast}\rho:\mu^{\ast}f^{\ast}f_{\ast}L\rightarrow \mu^{\ast}L
\]
is the subsheaf $\mu^{\ast}L\otimes\mathcal{O}_{\tilde{X}}(-E)$ of $\mu^{\ast}L$, $E$ being an effective divisor on $\tilde{X}$ such that $E+\textrm{except}(\mu)$ has simple normal crossing support. Here $\textrm{except}(\mu)$ is the exceptional divisor of $\mu$. Given $c>0$ we define
\[
\mathscr{I}(f,c|L|)=\mu_{\ast}\mathcal{O}_{\tilde{X}}(K_{\tilde{X}/X}-\lfloor cE\rfloor).
\]
Similarly, $\{\mathscr{I}(f,\frac{c}{k}|L^{k}|)\}_{(k\geqslant0)}$ has a unique maximal element.

\begin{definition}\label{d22}
The relative asymptotic multiplier ideal sheaf associated to $f$, $c$ and $|L|$,
\[
\mathscr{I}(f,c\|L\|)
\]
is defined to be the unique maximal member among the family of ideals $\{\mathscr{I}(f,\frac{c}{k}|L^{k}|)\}$.
\end{definition}

By definition, $\mathscr{I}(f,c\|L\|)=\mathscr{I}(f,\frac{c}{k}|L^{k}|)$ for some $k$. Let $\rho$ be the naturally defined homomorphism
\[
\rho:f^{\ast}f_{\ast}L^{k}\rightarrow L^{k}
\]
by abusing the notation. Let $\mu:\tilde{X}\rightarrow X$ be the smooth modification of $|L^{k}|$ with respect to $f$ such that $\textrm{Im}(\mu^{\ast}\rho)=\mu^{\ast}L^{k}\otimes\mathcal{O}_{\tilde{X}}(-E)$. Consider $\mu_{\ast}\mathcal{O}_{\tilde{X}}(-E)$ which is an ideal sheaf on $X$. Pick a local coordinate ball $U$ of $Y$, and let $u_{1},...,u_{m}$ be the generators of $\mu_{\ast}\mathcal{O}_{\tilde{X}}(-E)$ on $f^{-1}(U)$. The existence of these generators is obvious concerning the fact that $\textrm{Im}(\mu^{\ast}\rho)=\mu^{\ast}L^{k}\otimes\mathcal{O}_{\tilde{X}}(-E)$. Moreover they can be seen as the sections of $\Gamma(f^{-1}(U),L^{k})$.

Now let $\varphi_{U}=\log(|u_{1}|^{2}+\cdots+|u_{m}|^{2})$, which is a singular metric on $L^{k}|_{f^{-1}(U)}$. It is then easy to verify that
\[
\mathscr{I}(\frac{c}{k}\varphi_{U})=\mathscr{I}(f,\frac{c}{k}|L^{k}|)\textrm{ on }f^{-1}(U).
\]
Furthermore, if $v_{1},...,v_{m}$ are alternative generators and $\psi_{U}=\log(|v_{1}|^{2}+\cdots+|v_{m}|^{2})$, obviously we have $\mathscr{I}(\frac{c}{k}\varphi_{U})=\mathscr{I}(\frac{c}{k}\psi_{U})$. Hence all the $\mathscr{I}(\frac{c}{k}\varphi_{U})$ patch together to give a globally defined multiplier ideal sheaf $\mathscr{I}(\frac{c}{k}\varphi)$ such that
\[
\mathscr{I}(\frac{c}{k}\varphi)=\mathscr{I}(f,\frac{c}{k}|L^{k}|)=\mathscr{I}(f,c\|L\|)\textrm{ on }X.
\]
One should be careful that $\{\frac{1}{k}\varphi_{U}\}$ won't give a globally defined metric on $L$ in general. Hence $\frac{1}{k}\varphi$ is interpreted as the collection of functions $\{\frac{1}{k}\varphi_{U}\}$ by abusing the notation, which is called the (local) singular metric on $L$ associated to $\mathscr{I}(f,c\|L\|)$.

Now we collect some elementary properties from \cite{Laz04b}. Recall that for an ideal sheaf $\mathcal{I}$ on $X$, the corresponding ideal sheaf relative to $f$ is defined as
\[
\mathcal{I}_{f}:=\textrm{Im}(f^{\ast}f_{\ast}\mathcal{I}\rightarrow\mathcal{O}_{X}).
\]
\begin{proposition}\label{p21}
Let $f:X\rightarrow Y$ be a surjective morphism between compact K\"{a}hler manifolds, and $H_{1}, H_{2}$ are line bundles on $X$ whose restriction to a general fibre of $f$ has non-negative Iitaka dimension. Let $L_{1}, L_{2}$ be line bundles on $X$ with non-negative Iitaka dimension. $m$ and $k$ are non-negative integers.
\begin{enumerate}
 \item The natural inclusion
 \[
 H^{0}(X,L^{k}\otimes\mathscr{I}(\|L^{k}_{1}\|))\rightarrow H^{0}(X,L^{k}_{1})
 \]
 is an isomorphism for every $k\geqslant1$.
 \item Let $\mathfrak{a}_{m}=\mathfrak{a}(|L^{m}_{1}|)$ be the base-ideal of $|L^{m}_{1}|$,  where by convention we set $\mathfrak{a}_{m}=(0)$ if $|L^{m}_{1}|=\emptyset$. Then
\[
\mathfrak{a}_{m}\cdot\mathscr{I}(\|L^{k}_{2}\|)\subseteq\mathscr{I}(\|L^{m}_{1}\otimes L^{k}_{2}\|).
\]
 \item $\mathscr{I}(\|L^{k}_{1}\|)\supseteq\mathscr{I}(\|L^{k+1}_{1}\|)$ for every $k$.
\item Let $\mathfrak{a}_{k,f}=\mathfrak{a}(f,|H^{k}_{1}|)$ be the base-ideal of $|H^{k}_{1}|$ relative to $f$. There exits a integer $k_{0}$ such that for every $k\geqslant k_{0}$, the canonical map $\rho_{k}:f^{\ast}f_{\ast}H^{k}_{1}\rightarrow H^{k}_{1}$ factors through the inclusion $H^{k}_{1}\otimes\mathscr{I}(f,\|H^{k}_{1}\|)$, i.e.
\[
\mathfrak{a}_{k,f}\subseteq\mathscr{I}(f,\|H^{k}_{1}\|).
\]
Equivalently, the natural map
\[
f_{\ast}(H^{k}_{1}\otimes\mathscr{I}(f,\|H^{k}_{1}\|))\rightarrow f_{\ast}(H^{k}_{1})
\]
is an isomorphism.
\item $\mathfrak{a}_{m,f}\cdot\mathscr{I}(f,\|H^{k}_{2}\|)\subseteq\mathscr{I}(f,\|H^{m}_{1}\otimes H^{k}_{2}\|)$.
\item $\mathscr{I}(f,\|H^{k}_{1}\|)\supseteq\mathscr{I}(f,\|H^{k+1}_{1}\|)$ for every $k$.
\end{enumerate}
\begin{proof}
(i) is proved in \cite{Laz04b}, Proposition 11.2.10.

(ii) Fix $p\gg0$ and divisible enough that computes all of the multiplier ideals $\mathscr{I}(\|L^{m}_{1}\|)$, $\mathscr{I}(\|L^{k}_{2}\|)$ and $\mathscr{I}(\|L^{m}_{1}\otimes L^{k}_{2}\|)$. Let $\mathfrak{b}_{k}$ be the base-ideal of $|L^{k}_{2}|$, and let $\mathfrak{c}_{m,k}$ be the base-ideal of $|L^{m}_{1}\otimes L^{k}_{2}|$. Let $\mu:\tilde{X}\rightarrow X$ be the smooth modification of $\mathfrak{a}_{m}$, $\mathfrak{a}_{pm}$ $\mathfrak{b}_{pk}$ and $\mathfrak{c}_{pm,pk}$, such that
\[
 \mu^{\ast}\mathfrak{a}_{m}=\mathcal{O}_{\tilde{X}}(-E), \mu^{\ast}\mathfrak{a}_{pm}=\mathcal{O}_{\tilde{X}}(-F), \mu^{\ast}\mathfrak{b}_{pk}=\mathcal{O}_{\tilde{X}}(-G) \textrm{ and } \mu^{\ast}\mathfrak{c}_{pm,pk}=\mathcal{O}_{\tilde{X}}(-H),
\]
where $E=\sum a_{i}E_{i}$, $F=\sum b_{i}E_{i}$, $G=\sum c_{i}E_{i}$ and $H=\sum d_{i}E_{i}$ have simple normal crossing support. Then for every $i$,
\[
d_{i}\leqslant b_{i}+c_{i}\leqslant pa_{i}+c_{i}
\]
and consequently
\[
-a_{i}-\lfloor\frac{c_{i}}{p}\rfloor\leqslant -\lfloor\frac{d_{i}}{p}\rfloor.
\]
Thus
\[
\begin{split}
\mathfrak{a}_{m}\cdot\mathscr{I}(\|L^{k}_{2}\|)&\subseteq\mu_{\ast}\mathcal{O}_{\tilde{X}}(-E+K_{\tilde{X}/X}-\lfloor\frac{1}{p}G)\rfloor)\\
&\subseteq\mu_{\ast}\mathcal{O}_{\tilde{X}}(K_{\tilde{X}/X}-\lfloor\frac{1}{p}H)\rfloor)\\
&=\mathscr{I}(\|L^{m}_{1}\otimes L^{k}_{2}\|).
\end{split}
\]

(iii) is proved in \cite{Laz04b}, Proposition 11.1.8.

(iv) is proved in \cite{Laz04b}, Proposition 11.2.15.

(v) is similar with (ii), and we omit it here.

(vi) Fix $p\gg0$ and divisible enough that computes both of the multiplier ideals $\mathscr{I}(f,k\|H_{1}\|)$ and $\mathscr{I}(f,\|H^{k}_{1}\|)$. Then
\[
\begin{split}
\mathscr{I}(f,\|H^{k}_{1}\|)&=\mathscr{I}(f,\frac{1}{p}|H^{pk}_{1}|)\\
&=\mathscr{I}(f,\frac{k}{pk}|H^{pk}_{1}|)\\
&=\mathscr{I}(f,k\|H_{1}\|).
\end{split}
\]
Now we have
\[
\begin{split}
\mathscr{I}(f,\|H^{k}_{1}\|)&=\mathscr{I}(f,k\|H_{1}\|)\\
&\supseteq\mathscr{I}(f,(k+1)\|H_{1}\|)\\
&=\mathscr{I}(f,\|H^{k+1}_{1}\|).
\end{split}
\]
\end{proof}
\end{proposition}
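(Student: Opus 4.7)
The plan is to dispose of (i), (iii), and (iv) by direct reference to Lazarsfeld's treatise, since they involve only the absolute asymptotic multiplier ideal (for (i), (iii)) or the definition of $\mathscr{I}(f,\|H^k\|)$ paired with the well-known surjectivity of the global sections map (for (iv)). The genuine work lies in (ii), (v), and (vi), which compare asymptotic multiplier ideals of different line bundles.

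For (ii), the strategy is a single-model comparison of log resolutions. First I would fix one integer $p\gg 0$, sufficiently divisible, such that $\mathscr{I}(\|L_1^m\|)=\mathscr{I}(\tfrac{1}{p}|L_1^{pm}|)$, $\mathscr{I}(\|L_2^k\|)=\mathscr{I}(\tfrac{1}{p}|L_2^{pk}|)$, and $\mathscr{I}(\|L_1^m\otimes L_2^k\|)=\mathscr{I}(\tfrac{1}{p}|L_1^{pm}\otimes L_2^{pk}|)$ simultaneously; such $p$ exists because each of the three families of ideals stabilises. Next, take a common log resolution $\mu:\tilde X\to X$ of all four base-ideals $\mathfrak{a}_m$, $\mathfrak{a}_{pm}$, $\mathfrak{b}_{pk}$, $\mathfrak{c}_{pm,pk}$, so that their pullbacks are $\mathcal{O}_{\tilde X}(-E), \mathcal{O}_{\tilde X}(-F), \mathcal{O}_{\tilde X}(-G), \mathcal{O}_{\tilde X}(-H)$ with the union of supports simple normal crossing. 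The crucial inclusions $\mathfrak{a}_m^{\,p}\subseteq\mathfrak{a}_{pm}$ (given by multiplying sections) and $\mathfrak{a}_{pm}\cdot\mathfrak{b}_{pk}\subseteq\mathfrak{c}_{pm,pk}$ (by tensoring sections) translate, on $\tilde X$, into the componentwise inequalities $F\leqslant pE$ and $H\leqslant F+G$, hence $H\leqslant pE+G$. Dividing by $p$, rounding down, and pushing forward then yields exactly (ii). Item (v) is proved by the same argument, after passing to the inverse image $f^{-1}(U)$ of a small coordinate ball on $Y$ so that the relative base-ideals are generated globally by finitely many sections of the direct image.

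For (vi), I would exploit the homogeneity identity $\mathscr{I}(f,\|H_1^k\|)=\mathscr{I}(f,k\|H_1\|)$: taking $p\gg 0$ so that both sides stabilise, each equals $\mathscr{I}(f,\tfrac{1}{p}|H_1^{pk}|)$. Monotonicity of $\mathscr{I}(f,c\|H_1\|)$ in the real parameter $c$ (which is immediate from its description on one fixed log resolution as $\mu_\ast\mathcal{O}_{\tilde X}(K_{\tilde X/X}-\lfloor cE\rfloor)$) then gives $\mathscr{I}(f,k\|H_1\|)\supseteq\mathscr{I}(f,(k+1)\|H_1\|)$, which re-expressed via homogeneity is the desired inclusion.

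The hard part is the combinatorial bookkeeping in (ii) and (v): one must be disciplined in choosing the resolution large enough that all four divisors $E,F,G,H$ live on the same smooth model with simple normal crossings, because otherwise the inequalities among their coefficients cannot be combined after rounding down. Once the common resolution is fixed and $p$ is taken divisible enough to compute every ideal in sight, the rest reduces to arithmetic on the coefficients $a_i,b_i,c_i,d_i$, as displayed in the chain $-a_i-\lfloor c_i/p\rfloor\leqslant -\lfloor d_i/p\rfloor$, and the pushforward step is routine.
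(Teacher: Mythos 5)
Your proposal is correct and follows essentially the same route as the paper: items (i), (iii), (iv) by citation to Lazarsfeld, item (ii) via a common log resolution of $\mathfrak{a}_m$, $\mathfrak{a}_{pm}$, $\mathfrak{b}_{pk}$, $\mathfrak{c}_{pm,pk}$ with the coefficient inequalities $d_i\leqslant b_i+c_i\leqslant pa_i+c_i$ followed by rounding and pushforward, item (v) by localising over $Y$, and item (vi) via the homogeneity identity $\mathscr{I}(f,\|H_1^k\|)=\mathscr{I}(f,k\|H_1\|)$ together with monotonicity in the real parameter. No substantive differences from the paper's argument.
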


\subsection{Abundant line bundle}
\label{sec:abundant}
\begin{definition}\label{d23}
A line bundle $L$ is said to be abundant if $\kappa(L)=\textrm{nd}(L)$.
\end{definition}

This notion arises naturally. Moreover, A nef and abundant line bundle can be characterised by asymptotic multiplier ideal sheaf as follows.
\begin{theorem}[(Russo, \cite{Rus09})]
Assume that $\kappa(L)\geqslant0$. Then
\[
\mathscr{I}(\|L^{k}\|)=\mathcal{O}_{X}
\]
for all $k$ if and only if $L$ is nef and abundant.
\end{theorem}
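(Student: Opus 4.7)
The plan is to prove the two implications separately, drawing on Kawamata's structural theorem for nef and abundant line bundles as the principal geometric input and on the characterization of asymptotic multiplier ideal sheaves via base-ideals of $|L^{k}|$ developed in Sect.~\ref{sec:asymptotic}.

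For the ``if'' direction, assume $L$ is nef and abundant. Kawamata's theorem provides a smooth birational model $\mu:\tilde{X}\rightarrow X$, a surjective morphism $f:\tilde{X}\rightarrow Y$ with $\dim Y=\kappa(L)$, and a big and nef $\mathbb{Q}$-divisor $H$ on $Y$ such that $\mu^{\ast}L\sim_{\mathbb{Q}}f^{\ast}H$. After replacing $L$ by a sufficiently divisible positive multiple $L^{N}$ (so that the equivalence becomes integral), I would first verify $\mathscr{I}(\|H^{k}\|)=\mathcal{O}_{Y}$ by combining Kodaira's lemma (writing $H=A+E$ with $A$ ample and $E$ effective), Kawamata--Viehweg vanishing applied to absorb the effective part through the multiplier-ideal correction, and the base-point-freeness of $|mA|$ for $m\gg 0$. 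Triviality would then propagate from $Y$ to $\tilde{X}$ by pullback along $f$ (via flat base-change and the projection formula), and from $\tilde{X}$ to $X$ by pushforward along $\mu$ (using $\mu_{\ast}\mathcal{O}_{\tilde{X}}(K_{\tilde{X}/X})=\mathcal{O}_{X}$), yielding $\mathscr{I}(\|L^{kN}\|)=\mathcal{O}_{X}$ for all $k\geqslant 1$. Proposition~\ref{p21}(iii) (monotonicity of asymptotic multiplier ideals) then fills in the remaining indices.

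For the ``only if'' direction, assume $\mathscr{I}(\|L^{k}\|)=\mathcal{O}_{X}$ for every $k$. I would first establish nef-ness by exploiting the singular metric $\frac{1}{k}\varphi_{k}$ on $L$ associated to $\mathscr{I}(\|L^{k}\|)$: since its multiplier ideal is $\mathcal{O}_{X}$, Skoda's integrability estimate forces its Lelong numbers to be strictly less than $n/k$ everywhere. Letting $k\to\infty$ and invoking Demailly's analytic approximation machinery produces metrics on $L$ with arbitrarily small Lelong numbers, which is equivalent to nef-ness. For abundance, since the inequality $\kappa(L)\leqslant\textrm{nd}(L)$ always holds, it suffices to prove the reverse. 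The strategy is to combine Proposition~\ref{p21}(i) (giving $H^{0}(X,L^{k})=H^{0}(X,L^{k}\otimes\mathscr{I}(\|L^{k}\|))$) with a Riemann--Roch-style computation on the base of the Iitaka fibration of sufficiently divisible $|L^{k}|$: the triviality of the asymptotic ideals rules out any ``vertical'' numerical loss, so the growth of $h^{0}(X,L^{k})$ must realize the numerical dimension, giving $\kappa(L)\geqslant\textrm{nd}(L)$.

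The hardest step will be the abundance part of the ``only if'' direction. The global triviality of the asymptotic ideals only records the base-loci of $|L^{k}|$ modulo the correction by $K_{\tilde{X}/X}$ on successive birational models, and converting this into the sharp volume lower bound $h^{0}(X,L^{k})\gtrsim k^{\textrm{nd}(L)}$ requires a delicate coupling between the algebraic growth of sections and the newly-established nef-ness, which must be carried out without any circular recourse to the Nadel-type vanishing results developed in the present paper.
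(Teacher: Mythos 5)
First, a point of order: the paper offers no proof of this statement at all. It is quoted from Russo \cite{Rus09} and used as a black box whose only role is to deduce Corollary \ref{c11} from Theorem \ref{t11}, so your proposal can only be measured against Russo's published argument, not against anything in this text. With that caveat, your ``if'' direction is essentially the standard (and Russo's) route: reduce by Kawamata's structure theorem to a nef and big $H$ on the base of the Iitaka fibration, verify $\mathscr{I}(\|H^{k}\|)=\mathcal{O}_{Y}$ there, and transport the triviality back through $f$ and $\mu$. Two small corrections: the nef-and-big case needs no Kawamata--Viehweg vanishing --- writing $pH=A_{p}+E_{p}$ with $A_{p}$ ample and globally generated after scaling, one has $\mathfrak{b}(|pmH|)\supseteq\mathcal{O}_{Y}(-mE_{p})$ and hence $\mathscr{I}(\frac{1}{pm}|pmH|)\supseteq\mathscr{I}(\frac{1}{p}E_{p})=\mathcal{O}_{Y}$ for $p\gg0$ simply because $\frac{1}{p}E_{p}$ has arbitrarily small coefficients; and the two transport steps rest on $f_{\ast}\mathcal{O}_{\tilde{X}}=\mathcal{O}_{Y}$ (so that the base ideal of $|f^{\ast}H^{k}|$ is the inverse image of that of $|H^{k}|$) together with the birational transformation rule $\mathscr{I}(\|M\|)=\mu_{\ast}(\mathcal{O}_{\tilde{X}}(K_{\tilde{X}/X})\otimes\mathscr{I}(\|\mu^{\ast}M\|))$. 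These are routine once stated, and Proposition \ref{p21}(iii) does indeed fill in the non-divisible indices.

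The genuine gap is the abundance half of the ``only if'' direction, and you have in effect conceded it. The sentence asserting that triviality of the ideals ``rules out any vertical numerical loss, so the growth of $h^{0}(X,L^{k})$ must realize the numerical dimension'' is a restatement of the desired inequality $\kappa(L)\geqslant\textrm{nd}(L)$, not an argument for it: no mechanism is offered that converts $\mathscr{I}(\|L^{k}\|)=\mathcal{O}_{X}$ into the lower bound $h^{0}(L^{k})\geqslant ck^{\textrm{nd}(L)}$. The working mechanism is Nadel vanishing for asymptotic multiplier ideals twisted by a big and nef class (Theorem 11.2.12 and its corollaries in \cite{Laz04b}, valid for any divisor with $\kappa\geqslant0$): with trivial ideals this yields $H^{i}(X,K_{X}+mL+(n+1)A)=0$ for $i>0$ and, via Castelnuovo--Mumford regularity, global generation of $K_{X}+mL+(n+1)A$ for every $m$. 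This already gives nefness of $L$ as a limit of the nef classes $\frac{1}{m}(mL+K_{X}+(n+1)A)$ --- a cleaner route than your Lelong-number argument, which is salvageable but requires the quantitative form of Demailly's regularization theorem to pass from ``positive currents with uniformly small Lelong numbers'' to nefness, since that implication is not a formal equivalence. It also gives $h^{0}(K_{X}+mL+(n+1)A)=\chi(K_{X}+mL+(n+1)A)\geqslant cm^{\textrm{nd}(L)}$ by asymptotic Riemann--Roch; the remaining and decisive step is to trade the fixed twist $K_{X}+(n+1)A$ for a genuine multiple of $L$, and this exchange is exactly what is absent from your outline. Finally, your worry about circularity is unfounded: the vanishing statement needed is the classical one for divisors of non-negative Iitaka dimension twisted by an ample class, which predates and is independent of the results proved in this paper.
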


Corollary \ref{c11} now follows immediately from Theorems \ref{t11} and 2.1.

\subsection{Relative Iitaka dimension}
\label{sec:iitaka}
Let $f:X\rightarrow Y$ be a surjective morphism between projective manifolds, and $L$ a line bundle on $X$. Let $l$ be the dimension of a general fibre $F$ of $f$. We have
\begin{proposition}\label{p22}
For every coherent sheaf $\mathcal{G}$ on $X$, there is $C>0$ (independent of $L$) such that
\[
\textrm{rank}(f_{\ast}(\mathcal{G}\otimes L^{k}))\leqslant Ck^{l} \textrm{ for all }k\gg0.
\]
\begin{proof}
Let us write $L=A\otimes B^{-1}$, with $A$ and $B$ are very ample line bundles. For every $k$, if we choose $E$ general in the complete linear system $|B^{k}|$, then a
local defining function of $E$ is a non-zero divisor on $\mathcal{G}$, in which case we have an inclusion
\[
f_{\ast}(\mathcal{G}\otimes L^{k})\hookrightarrow f_{\ast}(\mathcal{G}\otimes A^{k}).
\]
Since $A$ is very ample, we know that there is a polynomial $P\in\mathbb{Q}[t]$ \cite{Laz04} with $\deg(P)\leqslant l$ such that $h^{0}(F,\mathcal{G}\otimes A^{k})=P(k)$ for $k\gg0$. Therefore $h^{0}(F,\mathcal{G}\otimes L^{k})\leqslant P(k)\leqslant Ck^{l}$ for a suitable $C>0$ and all $k\gg0$.
\end{proof}
\end{proposition}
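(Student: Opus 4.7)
The plan is to reduce the estimate to a bound on $h^{0}$ of a general fiber and then apply a Hilbert-polynomial argument. The generic rank of the coherent sheaf $f_{\ast}(\mathcal{G}\otimes L^{k})$ on $Y$ coincides with $h^{0}(F,(\mathcal{G}\otimes L^{k})|_{F})$ for a general fiber $F$, by generic flatness of $\mathcal{G}\otimes L^{k}$ with respect to $f$ combined with cohomology and base change, which becomes an isomorphism over the flat locus. So it suffices to produce an estimate $h^{0}(F,(\mathcal{G}\otimes L^{k})|_{F})\leqslant Ck^{l}$ for $k\gg 0$.

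To accommodate an arbitrary line bundle $L$, I would write $L=A\otimes B^{-1}$ with $A$ and $B$ very ample on $X$; such a decomposition exists because $X$ is projective. For each $k$, pick a section $s\in H^{0}(F,B^{k}|_{F})$ whose zero locus avoids the (finitely many) associated points of $\mathcal{G}|_{F}$, which is possible because $B^{k}|_{F}$ is very ample. Multiplication by $s$ is then injective on $\mathcal{G}|_{F}$, and after tensoring with $L^{k}|_{F}$ it produces an injection
\[
(\mathcal{G}\otimes L^{k})|_{F}\hookrightarrow(\mathcal{G}\otimes A^{k})|_{F},
\]
so in particular $h^{0}(F,(\mathcal{G}\otimes L^{k})|_{F})\leqslant h^{0}(F,(\mathcal{G}\otimes A^{k})|_{F})$.

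The final step is a standard application of Hilbert-polynomial theory: for any coherent sheaf $\mathcal{H}$ on a projective scheme of dimension $l$ and any very ample line bundle on it, the Euler characteristic $\chi(\mathcal{H}\otimes A^{k})$ is a polynomial in $k$ of degree at most $l$, and Serre vanishing makes the higher cohomology terms disappear for $k\gg 0$. Applied to $\mathcal{H}=\mathcal{G}|_{F}$ with the restriction $A|_{F}$, this yields $h^{0}(F,(\mathcal{G}\otimes A^{k})|_{F})=P(k)$ with $\deg P\leqslant l$, hence the desired $O(k^{l})$ bound.

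The only real subtlety is the first step: certifying that the generic rank of $f_{\ast}(\mathcal{G}\otimes L^{k})$ is computed by $h^{0}$ on a general fiber. This rests on generic flatness and base change, but it is where the fiber dimension $l$ enters essentially. Everything else, including the Bertini-style choice of a general section of $B^{k}|_{F}$ avoiding the associated points of $\mathcal{G}|_{F}$, is routine.
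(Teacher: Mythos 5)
Your proposal is correct and follows essentially the same route as the paper: decompose $L=A\otimes B^{-1}$ with $A,B$ very ample, inject $\mathcal{G}\otimes L^{k}$ into $\mathcal{G}\otimes A^{k}$ via a general section of $B^{k}$ avoiding the associated points, and conclude with the Hilbert polynomial of the very ample $A$ on the $l$-dimensional fiber. The only difference is that you make explicit the identification of $\textrm{rank}(f_{\ast}(\mathcal{G}\otimes L^{k}))$ with $h^{0}$ on a general fiber via generic flatness and base change, a step the paper uses tacitly.
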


\begin{definition}\label{d24}
The relative Iitaka dimension $\kappa(L,f)$ of $L$ is the biggest integer $m$ such that there is $C>0$ satisfying
\[
\textrm{rank}(f_{\ast}L^{k})\geqslant Ck^{m} \textrm{ for all }k\gg0
\]
with the convention that $\kappa(L,f)=-\infty$ if $\textrm{rank }f_{\ast}L^{k}=0$.
\end{definition}
Note that $\kappa(L,f)$ takes value in $\{-\infty, 0,1,...,l\}$ by Proposition \ref{p22}. In particular, if $\kappa(L,f)=l$, we say that $L$ is $f$-big.

\section{The harmonic theory}
\label{sec:harmonic}
Let $(L,\varphi)$ be a pseudo-effective line bundle on a compact K\"{a}hler manifold $(X,\omega)$. Assume that there exit integers $k_{0}$, $m$ and sections $s_{1},...,s_{m}\in L^{k_{0}}$ such that
\[
(|s_{1}|^{2}+\cdots+|s_{m}|^{2})e^{-k_{0}\varphi}
\]
is bounded on $X$. In this section, we will develop the harmonic theory on such a line bundle.

\subsection{The harmonic forms}
\label{sec:forms}

The Laplacian operator associated to a singular metric $\varphi$ is not well-defined in canonical harmonic theory. Fortunately, by Demailly's approximation technique \cite{DPS01}, we can find a family of metrics $\{\varphi_{\varepsilon}\}$ on $L$ with the following properties:

(a) $\varphi_{\varepsilon}$ is smooth on $X-Z_{\varepsilon}$ for a closed subvariety $Z_{\varepsilon}$;

(b) $\varphi\leqslant\varphi_{\varepsilon_{1}}\leqslant\varphi_{\varepsilon_{2}}$ holds for any $0<\varepsilon_{1}\leqslant\varepsilon_{2}$;

(c) $\mathscr{I}(\varphi)=\mathscr{I}(\varphi_{\varepsilon})$; and

(d) $i\Theta_{L,\varphi_{\varepsilon}}\geqslant-\varepsilon\omega$.

Since
\[
(|s_{1}|^{2}+\cdots+|s_{m}|^{2})e^{-k_{0}\varphi}
\]
is bounded on $X$, the pole-set of $\varphi_{\varepsilon}$ for every $\varepsilon>0$ is contained in the subvariety
\[
Z:=\{x|s_{1}(x)=\cdots=s_{m}(x)=0\}
\]
by property (b). Hence, instead of (a), we can assume that

(a') $\varphi_{\varepsilon}$ is smooth on $X-Z$, where $Z$ is a closed subvariety of $X$ independent of $\varepsilon$.

Throughout this paper, when saying that $\{\psi_{\varepsilon}\}$ is a regularising sequence of a singular metric $\psi$, we always refer to such a family of metrics with properties (a'), (b), (c) and (d).

Now let $Y=X-Z$. We use the method in \cite{Dem82} to construct a complete K\"{a}hler metric on $Y$ as follows. Since $Y$ is weakly pseudo-convex, we can take a smooth plurisubharmonic exhaustion function $\psi$ on $X$. Define $\tilde{\omega}_{l}=\omega+\frac{1}{l}i\partial\bar{\partial}\psi^{2}$ for $l\gg0$. It is easy to verify that $\tilde{\omega}_{l}$ is a complete K\"{a}hler metric on $Y$ and $\tilde{\omega}_{l_{1}}\geqslant\tilde{\omega}_{l_{2}}\geqslant\omega$ for $l_{1}\leqslant l_{2}$.

Let $L^{n,q}_{(2)}(Y,L)_{\varphi_{\varepsilon},\tilde{\omega}_{l}}$ be the $L^{2}$-space of $L$-valued $(n,q)$-forms on $Y$ with respect to the inner product given by $\varphi_{\varepsilon},\tilde{\omega}$. Then we have the orthogonal decomposition
\begin{equation}\label{e31}
L^{n,q}_{(2)}(Y,L)_{\varphi_{\varepsilon},\tilde{\omega}_{l}}=\mathrm{Im}\bar{\partial}\bigoplus\mathcal{H}^{n,q}_{\varphi_{\varepsilon}, \tilde{\omega}_{l}}(L)\bigoplus\mathrm{Im}\bar{\partial}^{\ast}_{\varphi_{\varepsilon}}
\end{equation}
where
\[
\begin{split}
\mathrm{Im}\bar{\partial}&=\mathrm{Im}(\bar{\partial}:L^{n,q-1}_{(2)}(Y,L)_{\varphi_{\varepsilon},\tilde{\omega}_{l}}\rightarrow L^{n,q}_{(2)}(Y,L)_{\varphi_{\varepsilon},\tilde{\omega}_{l}}),\\
  \mathcal{H}^{n,q}_{\varphi_{\varepsilon}, \tilde{\omega}_{l}}(L)&=\{\alpha\in L^{n,q}_{(2)}(Y,L)_{\varphi_{\varepsilon},\tilde{\omega}_{l}};\bar{\partial}\alpha=0, \bar{\partial}^{\ast}_{\varphi_{\varepsilon}}\alpha=0\},
\end{split}
\]
and
\[
\mathrm{Im}\bar{\partial}^{\ast}_{\varphi_{\varepsilon}}=\mathrm{Im}(\bar{\partial}^{\ast}_{\varphi_{\varepsilon}}:L^{n,q+1}_{(2)}(Y,L)_{\varphi_{\varepsilon},\tilde{\omega}_{l}}\rightarrow L^{n,q}_{(2)}(Y,L)_{\varphi_{\varepsilon},\tilde{\omega}_{l}}).
\]
We give a brief explanation for the decomposition (\ref{e31}). Usually $\mathrm{Im}\bar{\partial}$ is not closed in the $L^{2}$-space of a noncompact manifold even if the metric is complete. However, in the situation we consider here, $Y$ has the compactification $X$, and the forms on $Y$ are bounded in $L^{2}$-norms. Such a form will have good extension properties. Therefore the set $L^{n,q}_{(2)}(Y,L)_{\varphi_{\varepsilon},\tilde{\omega}_{l}}\cap\mathrm{Im}\bar{\partial}$ behaves much like the space
\[
\mathrm{Im}(\bar{\partial}:L^{n,q-1}_{(2)}(X,L)_{\varphi,\omega}\rightarrow L^{n,q}_{(2)}(X,L)_{\varphi,\omega}),
\]
which is surely closed. The complete explanation can be found in \cite{Fuj12,Wu17}.

Now we have all the ingredients for the definition of $\Box_{0}$-harmonic forms. We denote the Lapalcian operator on $Y$ associated to $\tilde{\omega}_{l}$ and $\varphi_{\varepsilon}$ by $\Box_{\varepsilon,l}$. Recall that for two $L$-valued $(n,q)$-forms $\alpha,\beta$ (not necessary to be $\bar{\partial}$-closed), we say that they are cohomologically equivalent if there exits an $L$-valued $(n,q-1)$-form $\gamma$ such that $\alpha=\beta+\bar{\partial}\gamma$. We denote by $\alpha\in[\beta]$ this equivalence relationship.
\begin{definition}\label{d31}
Let $\alpha$ be an $L$-valued $(n,q)$-form on $X$ with bounded $L^{2}$-norm with respect to $\omega,\varphi$. Assume that for every $\varepsilon\ll1$ and $l\gg1$, there exists a Dolbeault cohomological equivalent class $\alpha_{\varepsilon,l}\in[\alpha|_{Y}]$ such that
\begin{enumerate}
  \item $\Box_{\varepsilon,l}\alpha_{\varepsilon,l}=0$ on $Y$;
  \item $\alpha_{\varepsilon,l}\rightarrow\alpha|_{Y}$ in $L^{2}$-norm.
\end{enumerate}

Then we call $\alpha$ a $\Box_{0}$-harmonic form. The space of all the $\Box_{0}$-harmonic forms is denoted by
\[
\mathcal{H}^{n,q}(X,L\otimes\mathscr{I}(\varphi)).
\]
\end{definition}

\subsection{The Hodge-type isomorphism}
\label{sec:hodge}

Firstly, we prove a regularity result concerning the $\Box_{0}$ operator.
\begin{proposition}\label{p31}
Let $\alpha$ be an $L$-valued $(n,q)$-form (not necessary to be smooth) on $X$ whose $L^{2}$-norm against $\varphi$ is bounded. Then
\begin{enumerate}
  \item if $\alpha$ is $\Box_{0}$-harmonic, $\bar{\partial}(\ast\alpha)=0$. Equivalently, $\ast\alpha$ is holomorphic.
  \item if $\Box_{0}\alpha=0$, $\alpha$ must be smooth.
\end{enumerate}
\begin{proof}
(i) We denote $\alpha|_{Y}$ simply by $\alpha_{Y}$. Since $\alpha$ is $\Box_{0}$-harmonic, there exists an $\alpha_{l,\varepsilon}\in[\alpha_{Y}]$ with $\alpha_{l,\varepsilon}\in\mathcal{H}^{n,q}_{\varphi_{\varepsilon},\tilde{\omega}_{l}}(L)$ for every $l,\varepsilon$ such that $\lim\alpha_{l,\varepsilon}=\alpha_{Y}$. In particular, $\bar{\partial}\alpha_{l,\varepsilon}=\bar{\partial}^{\ast}_{\varphi_{\varepsilon}}\alpha_{l,\varepsilon}=0$.

Let's recall the generalized Kodaira--Akizuki--Nakano formula in \cite{Tak95}. Let $\psi$ be a smooth real-valued function on $X$, and let $\chi$ be a smooth metric on $L$. Then we have
\begin{equation}\label{e32}
\begin{split}
&\|\sqrt{\eta}(\bar{\partial}+\bar{\partial}\psi\wedge)\alpha\|^{2}_{\chi}+\|\sqrt{\eta}\bar{\partial}^{\ast}\alpha\|^{2}_{\chi}\\
=&\|\sqrt{\eta}(\partial_{\chi}-\partial\psi\wedge)^{\ast}\alpha\|^{2}_{\chi}+\|\sqrt{\eta}\partial_{\chi}\alpha\|^{2}_{\chi}+<i\eta[\Theta_{L,\chi}+\partial\bar {\partial}\psi,\Lambda]\alpha,\alpha>_{\chi}
\end{split}
\end{equation}
for any $\alpha\in A^{p,q}(X,L)$ and $\eta=e^{\psi}$. Here $\partial_{\chi}$ is the $(1,0)$-part of the Chern connection associated to $\chi$. We remark here that formula (\ref{e32}) is also valid on $Y$ since the real codimension of $Y$ is at least $2$.

Apply formula (\ref{e32}) on $Y$ with $\eta=1$, we have
\begin{equation}\label{e33}
\begin{split}
0=&\|\bar{\partial}\alpha_{l,\varepsilon}\|^{2}_{\varphi_{\varepsilon},\tilde{\omega}_{l}}+\|\bar{\partial}^{\ast}_{\varphi_{\varepsilon}}\alpha_{l,\varepsilon}\|^{2}_{\varphi_{\varepsilon},\tilde{\omega}_{l}}\\
=&\|\partial^{\ast}_{\varphi_{\varepsilon}}\alpha_{l,\varepsilon}\|^{2}_{\varphi_{\varepsilon},\tilde{\omega}_{l}}+<i[\Theta_{L,\varphi_{\varepsilon}},\Lambda]\alpha_{l,\varepsilon},\alpha_{l,\varepsilon}>_{\varphi_{\varepsilon},\tilde{\omega}_{l}}.
\end{split}
\end{equation}
Remember that $i\Theta_{L,\varphi_{\varepsilon}}\geqslant-\varepsilon\omega$, thus
\[
<i[\Theta_{L,\varphi_{\varepsilon}},\Lambda]\alpha_{l,\varepsilon},\alpha_{l,\varepsilon}>_{\varphi_{\varepsilon},\tilde{\omega}_{l}}\geqslant-\varepsilon^{\prime}\tilde{\omega}_{l}
\]
by elementary computation. In particular, $\varepsilon^{\prime}\rightarrow0$ as $\varepsilon$ tends zero and $l$ tends infinity. Now take the limit on the both sides of formula (\ref{e33}) with respect to $l,\varepsilon$, we eventually obtain that
\[
\begin{split}
\lim\|\partial^{\ast}_{\varphi_{\varepsilon}}\alpha_{l,\varepsilon}\|^{2}_{\varphi_{\varepsilon},\tilde{\omega}_{l}}=\lim<i[\Theta_{L,\varphi_{\varepsilon}},\Lambda]\alpha_{l,\varepsilon},\alpha_{l,\varepsilon}>_{\varphi_{\varepsilon},\tilde{\omega}_{l}}=0.
\end{split}
\]
In particular,
\[
0=\lim\partial^{\ast}_{\varphi_{\varepsilon}}\alpha_{l,\varepsilon}=\ast\bar{\partial}\ast\lim\alpha_{l,\varepsilon}=\ast\bar{\partial}\ast\alpha
\]
in $L^{2}$-topology. Equivalently, $\bar{\partial}\ast\alpha=0$ on $Y$ in analytic topology. Hence $\ast\alpha$ is a holomorphic $L$-valued $(n-q,0)$-form on $Y$. On the other hand, since $\ast\alpha$ has the bounded $L^{2}$-norm on $Y$, it extends to the whole space by classic $L^{2}$-extension theorem \cite{Ohs02}. The extension is still denoted by $\ast\alpha$, which is an $L$-valued holomorphic $(n-q,0)$-form on $X$.

(ii) Since $\alpha=c_{n-q}\frac{\omega^{q}}{q!}\wedge\ast\alpha$, $\alpha$ must be smooth.
\end{proof}
\end{proposition}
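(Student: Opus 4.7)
The plan is to handle part (i) by a Bochner--Kodaira--Nakano computation on the approximating data $(\varphi_\varepsilon,\tilde\omega_l)$, then read off part (ii) from Hodge star duality.

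For part (i), I would take the approximations $\alpha_{\varepsilon,l}\in[\alpha|_Y]$ furnished by Definition \ref{d31}, which satisfy $\bar\partial\alpha_{\varepsilon,l}=\bar\partial^{\ast}_{\varphi_\varepsilon}\alpha_{\varepsilon,l}=0$ and $\alpha_{\varepsilon,l}\to\alpha|_Y$ in $L^2$. Applying the generalised Kodaira--Akizuki--Nakano formula (with $\psi\equiv 0$, so $\eta=1$) on the complete Kähler manifold $(Y,\tilde\omega_l)$ with weight $\varphi_\varepsilon$, harmonicity collapses the identity to
\[
0=\|\partial^{\ast}_{\varphi_\varepsilon}\alpha_{\varepsilon,l}\|^{2}+\langle i[\Theta_{L,\varphi_\varepsilon},\Lambda]\alpha_{\varepsilon,l},\alpha_{\varepsilon,l}\rangle.
\]
Since $i\Theta_{L,\varphi_\varepsilon}\geqslant-\varepsilon\omega$ and $\tilde\omega_l\geqslant\omega$, an elementary pointwise estimate bounds the curvature term below by $-\varepsilon'\|\alpha_{\varepsilon,l}\|^{2}$ with $\varepsilon'\to 0$ as $\varepsilon\to 0$ and $l\to\infty$. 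Passing to the limit then forces $\partial^{\ast}_{\varphi_\varepsilon}\alpha_{\varepsilon,l}\to 0$ in $L^2$. Combining this with the formal identity $\partial^{\ast}_{\varphi_\varepsilon}=\pm\ast\bar\partial\ast$ (valid on $Y$ since $\varphi_\varepsilon$ is smooth there) yields $\bar\partial(\ast\alpha)=0$ on $Y$ in the distributional sense.

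Next I would extend $\ast\alpha$, now an $L$-valued holomorphic $(n-q,0)$-form of bounded $L^{2}$-norm on $Y=X\setminus Z$, to a global holomorphic form on $X$ by the standard $L^2$-extension across the closed analytic subvariety $Z$ (Ohsawa's $L^2$-extension, or equivalently Riemann-type removability for holomorphic forms of bounded $L^2$-norm). For part (ii), I would invoke the pointwise algebraic relation $\alpha=c_{n-q}(\omega^{q}/q!)\wedge\ast\alpha$ on $Y$: smoothness of $\ast\alpha$ then transfers to $\alpha$ on $Y$, and the bounded $L^2$-norm of $\alpha$ on $X$ forces the identification to persist across $Z$, so $\alpha$ is smooth everywhere.

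The main obstacle I anticipate is the limit step. The approximations $\alpha_{\varepsilon,l}$ live in distinct $L^2$-Hilbert spaces indexed by $(\varepsilon,l)$, so I must check that the given $L^2$-convergence $\alpha_{\varepsilon,l}\to\alpha|_Y$ is strong enough, together with the vanishing of $\partial^{\ast}_{\varphi_\varepsilon}\alpha_{\varepsilon,l}$, to produce distributional vanishing of $\bar\partial(\ast\alpha)$ globally on $Y$. The monotonicity properties $\tilde\omega_{l_1}\geqslant\tilde\omega_{l_2}$ and $\varphi\leqslant\varphi_{\varepsilon_1}\leqslant\varphi_{\varepsilon_2}$ from Section \ref{sec:forms} are the crucial tools, ensuring that testing against a fixed smooth compactly supported form on $Y$ controls all the limiting pairings uniformly.
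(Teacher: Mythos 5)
Your proposal is correct and follows essentially the same route as the paper: the same Takegoshi-type Bochner identity with $\eta=1$ applied to the harmonic approximations $\alpha_{\varepsilon,l}$, the same limit argument forcing $\lim\partial^{\ast}_{\varphi_{\varepsilon}}\alpha_{\varepsilon,l}=0$ and hence $\bar{\partial}(\ast\alpha)=0$ on $Y$, the same $L^{2}$-extension across $Z$, and the same identity $\alpha=c_{n-q}\frac{\omega^{q}}{q!}\wedge\ast\alpha$ for part (ii). The convergence issue you flag (approximants living in varying Hilbert spaces) is real and is handled in the paper only implicitly via the monotonicity $\tilde{\omega}_{l}\geqslant\omega$ and $\varphi\leqslant\varphi_{\varepsilon}$, exactly as you propose.
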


Next, we generalise Hodge's theorem to show that Definition \ref{d31} is meaningful.
\begin{proposition}\label{p32}
Let $(X,\omega)$ be a compact K\"{a}hler manifold. $(L,\varphi)$ is a pseudo-effective line bundle on $X$, and $E$ is an arbitrary vector bundle. Then the following isomorphism holds for all $k$:
\begin{equation}\label{e34}
\begin{split}
   \mathcal{H}^{n,q}(X,L^{k}\otimes E\otimes\mathscr{I}(k\varphi))&\simeq H^{n,q}(X,L^{k}\otimes E\otimes\mathscr{I}(k\varphi))
\end{split}
\end{equation}
In particular, when $\varphi$ is smooth, $\alpha\in\mathcal{H}^{n,q}(X,L)$ if and only if $\alpha$ is $\Box_{0}$-harmonic in the usual sense.
\begin{proof}
We only prove this isomorphism when $k=1$ and $E=\mathcal{O}_{X}$. The general case follows the same way.

Let $\|\cdot\|_{\varphi,\omega}$ be the $L^{2}$-norm defined by $\varphi$ and $\omega$. We use the de Rham--Weil isomorphism
\[
H^{n,q}(X,L\otimes\mathscr{I}(\varphi))\cong\frac{\mathrm{Ker}\bar\partial\cap L^{n,q}_{(2)}(X,L)_{\varphi,\omega}}{\mathrm{Im}\bar{\partial}}
\]
to represent a given cohomology class $[\alpha]\in H^{n,q}(X,L\otimes\mathscr{I}(\varphi))$ by a $\bar{\partial}$-closed $L$-valued $(n,q)$-form $\alpha$ with $\|\alpha\|_{\varphi,\omega}<\infty$. Since $\tilde{\omega}_{l}\geqslant\omega$, it is easy to verify that
\[
|\alpha_{Y}|^{2}_{\tilde{\omega}_{l}}e^{-\varphi_{\varepsilon}}dV_{\tilde{\omega}_{l}}\leqslant|\alpha|^{2}_{\omega}e^{-\varphi_{\varepsilon}}dV_{\omega},
\]
which leads to the inequality $\int_{Y}|\alpha_{Y}|^{2}_{\tilde{\omega}_{l}}e^{-\varphi_{\varepsilon}}\leqslant\int_{Y}|\alpha|^{2}_{\omega}e^{-\varphi}$. Then we have
$\|\alpha_{Y}\|_{\varphi_{\varepsilon},\tilde{\omega}_{l}}\leqslant\|\alpha\|_{\varphi,\omega}$ which implies
\[
\alpha_{Y}\in L^{n,q}_{(2)}(Y,L)_{\varphi_{\varepsilon},\tilde{\omega}_{l}}.
\]
By decomposition (\ref{e31}), we have a harmonic representative $\alpha_{\varepsilon,l}$ in
\[
\mathcal{H}^{n,q}_{\varphi_{\varepsilon,\tilde{\omega}_{l}}}(L),
\]
which means that $\Box_{\varepsilon,l}\alpha_{\varepsilon,l}=0$ on $Y$ for all $\varepsilon,l$. Moreover, since a harmonic representative minimizes the $L^{2}$-norm, we have
\[
   \|\alpha_{\varepsilon,l}\|_{\varphi_{\varepsilon},\tilde{\omega}_{l}}\leqslant\|\alpha_{Y}\|_{\varphi_{\varepsilon},\tilde{\omega}_{l}}\leqslant \|\alpha\|_{\varphi,\omega}.
\]
So we can take the limit $\tilde{\alpha}$ of (a subsequence of) $\{\alpha_{\varepsilon,l}\}$ such that
\[
\tilde{\alpha}\in[\alpha_{Y}].
\]
It is left to extend it to $X$.

Indeed, by the proof of Proposition \ref{p31}, (i), $\tilde{\alpha}$ maps to a $\bar{\partial}$-closed $L$-valued $(n-q,0)$-form $\ast\tilde{\alpha}$ on $X$. We denote this morphism by $S^{q}(\tilde{\alpha})$. Furthermore, it is shown by Proposition 2.2 in \cite{Wu17} that $\hat{\alpha}:=c_{n-q}\frac{\omega^{q}}{q!}\wedge S^{q}(\tilde{\alpha})$ is an $L$-valued $(n,q)$-form with
\[
  \hat{\alpha}|_{Y}=\tilde{\alpha}.
\]
Therefore we finally get an extension $\hat{\alpha}$ of $\tilde{\alpha}$. By definition,
\[
\hat{\alpha}\in\mathcal{H}^{n,q}(X,L\otimes\mathscr{I}(\varphi)).
\]
We denote this morphism by $i([\alpha])=\hat{\alpha}$.

On the other hand, for a given $\alpha\in\mathcal{H}^{n,q}(X,L\otimes\mathscr{I}(\varphi))$, by definition there exists an $\alpha_{\varepsilon,l}\in[\alpha_{Y}]$ with $\alpha_{\varepsilon,l}\in\mathcal{H}^{n,q}_{\varphi_{\varepsilon,\tilde{\omega}_{l}}}(L)$ for every $\varepsilon,l$. In particular, $\bar{\partial}\alpha_{\varepsilon,l}=0$. So all of the $\alpha_{\varepsilon,l}$ together with $\alpha_{Y}$ define a common cohomology class $[\alpha_{Y}]$ in $H^{n,q}(Y,L\otimes\mathscr{I}(\varphi))$. Here we use the property (c) that $\mathscr{I}(\varphi_{\varepsilon})=\mathscr{I}(\varphi)$ for every $\varepsilon$. It is left to extend this class to $X$.

We use the $S^{q}$ again. It maps $[\alpha_{Y}]$ to
\[
S^{q}(\alpha_{Y})\in H^{0}(X,\Omega^{n-q}_{X}\otimes L\otimes\mathscr{I}(\varphi)).
\]
Furthermore,
\[
   c_{n-q}\frac{\omega^{q}}{q!}\wedge S^{q}(\alpha_{Y})\in H^{n,q}(X,L\otimes\mathscr{I}(\varphi))
\]
with $[(c_{n-q}\frac{\omega^{q}}{q!}\wedge S^{q}(\alpha_{Y}))|_{Y}]=[\alpha_{Y}]$. Here we use the fact that $\omega$ is a K\"{a}hler metric. We denote this morphism by $j(\alpha)=[c_{n-q}\omega_{q}\wedge S^{q}(\alpha_{Y})]$. It is easy to verify that $i\circ j=\textrm{id}$ and $j\circ i=\textrm{id}$. The proof is finished.
\end{proof}
\end{proposition}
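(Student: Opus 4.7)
The plan is to establish the isomorphism by constructing explicit maps in both directions via the approximating sequence $\{\varphi_\varepsilon\}$ and the complete Kähler metrics $\{\tilde\omega_l\}$ on $Y=X\setminus Z$. It suffices to treat $k=1$ and $E=\mathcal{O}_X$, since the same argument carries through verbatim once $L$ is replaced by $L^k\otimes E$ with the naturally induced metric $k\varphi+h_E$ (the curvature lower bound from property (d) still holds up to a fixed error from $E$, which is harmless after rescaling).

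For the map from cohomology to harmonic forms, I would use the de Rham--Weil isomorphism to represent $[\alpha]\in H^{n,q}(X,L\otimes\mathscr{I}(\varphi))$ by a $\bar\partial$-closed form $\alpha$ with $\|\alpha\|_{\varphi,\omega}<\infty$. Because $\varphi\leqslant\varphi_\varepsilon$ and $\tilde\omega_l\geqslant\omega$, a pointwise computation shows
\[
\|\alpha|_Y\|_{\varphi_\varepsilon,\tilde\omega_l}\leqslant\|\alpha\|_{\varphi,\omega},
\]
so $\alpha|_Y$ lies in $L^{n,q}_{(2)}(Y,L)_{\varphi_\varepsilon,\tilde\omega_l}$. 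The orthogonal decomposition \eqref{e31} then yields a harmonic representative $\alpha_{\varepsilon,l}\in[\alpha|_Y]\cap\mathcal{H}^{n,q}_{\varphi_\varepsilon,\tilde\omega_l}(L)$. The essential gain is that harmonic representatives minimise $L^2$-norm, so $\|\alpha_{\varepsilon,l}\|_{\varphi_\varepsilon,\tilde\omega_l}\leqslant\|\alpha\|_{\varphi,\omega}$ uniformly in $\varepsilon,l$. A standard weak-compactness argument then extracts a subsequential limit $\tilde\alpha$, still cohomologous to $\alpha|_Y$ on $Y$.

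The main obstacle is extending $\tilde\alpha$ from $Y$ across the analytic subvariety $Z$ to produce a form on $X$ that is $\Box_0$-harmonic in the sense of Definition \ref{d31}. Direct extension of an $(n,q)$-form with only an $L^2$-bound is not available. Here I would invoke Proposition \ref{p31}(i) to deduce that $\ast\tilde\alpha$ is a holomorphic $L$-valued $(n-q,0)$-form on $Y$ with bounded $L^2$-norm; the Ohsawa--Takegoshi type $L^2$-extension theorem then extends $\ast\tilde\alpha$ across $Z$ to an element of $H^0(X,\Omega^{n-q}_X\otimes L\otimes\mathscr{I}(\varphi))$. Reconstructing via the Kähler identity
\[
\hat\alpha:=c_{n-q}\frac{\omega^q}{q!}\wedge\ast\tilde\alpha
\]
produces a smooth (by Proposition \ref{p31}(ii)) global form on $X$ that restricts to $\tilde\alpha$ on $Y$, and hence lies in $\mathcal{H}^{n,q}(X,L\otimes\mathscr{I}(\varphi))$. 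Define $i([\alpha]):=\hat\alpha$.

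For the reverse direction, given $\alpha\in\mathcal{H}^{n,q}(X,L\otimes\mathscr{I}(\varphi))$, the approximating forms $\alpha_{\varepsilon,l}$ are all $\bar\partial$-closed and cohomologous to $\alpha|_Y$, so $\alpha|_Y$ defines a class in $H^{n,q}(Y,L\otimes\mathscr{I}(\varphi))$ (using property (c) so $\mathscr{I}(\varphi_\varepsilon)=\mathscr{I}(\varphi)$). Applying the star operator, passing to the holomorphic side, extending across $Z$, and then wedging back with $c_{n-q}\omega^q/q!$ gives a class $j(\alpha)\in H^{n,q}(X,L\otimes\mathscr{I}(\varphi))$. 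The identities $i\circ j=\mathrm{id}$ and $j\circ i=\mathrm{id}$ follow because both procedures agree with the identity on $Y$ and both the harmonic representative and its extension across $Z$ are uniquely determined by the underlying $L^2$-holomorphic $(n-q,0)$-form. The "in particular" statement is then immediate: when $\varphi$ is smooth, $Z=\emptyset$, the approximation is trivial, and the definition collapses to the classical notion of harmonic form.
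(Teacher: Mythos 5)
Your proposal follows essentially the same route as the paper's own proof: de Rham--Weil representation, the uniform $L^{2}$-bound from $\varphi\leqslant\varphi_{\varepsilon}$ and $\tilde{\omega}_{l}\geqslant\omega$, harmonic representatives from the decomposition (\ref{e31}) with norm minimisation, subsequential limits, and extension across $Z$ via Proposition \ref{p31}(i) together with $L^{2}$-extension of the holomorphic $(n-q,0)$-form $\ast\tilde{\alpha}$. The argument is correct and matches the paper step for step, with only minor added commentary on the general case $(k,E)$ and on why $i$ and $j$ are mutually inverse.
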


\subsection{A Koll\'{a}r-type injectivity theorem}
\label{sec:kollar}
We prove Theorem \ref{t12} to finish this section.
\begin{proof}[Proof of Theorem \ref{t12}]
Let $\varphi_{1}$ and $\varphi_{2}$ be the singular metrics on $L$ and $L\otimes H$ respectively mentioned in Sect.\ref{sec:asymptotic}, such that
\[
\mathscr{I}(\|L\|)=\mathscr{I}(\varphi_{1})
\]
and
\[
\mathscr{I}(\|L\otimes H\|)=\mathscr{I}(\varphi_{2}).
\]
In particular, it is easy to verify that there exits sections $u_{1},...,u_{m}\in L^{k_{1}}$ and $v_{1},...,v_{l}\in L^{k_{2}}\otimes H^{k_{2}}$ such that $(|u_{1}|^{2}+\cdots+|u_{m}|^{2})e^{-k_{1}\varphi_{1}}$ and $(|v_{1}|^{2}+\cdots+|v_{l}|^{2})e^{-k_{2}\varphi_{2}}$ are both bounded on $X$. Moreover, $(L,\varphi_{1})$ and $(L\otimes H,\varphi_{2})$ are pseudo-effective. Now we apply Proposition \ref{p32} to obtain that
\[
H^{q}(X,K_{X}\otimes L\otimes\mathscr{I}(\|L\|))\simeq H^{q}(X,K_{X}\otimes L\otimes\mathscr{I}(\varphi_{1}))\simeq\mathcal{H}^{n,q}(X,L\otimes\mathscr{I}(\varphi_{1}))
\]
and
\[
H^{q}(X,K_{X}\otimes L\otimes H\otimes\mathscr{I}(\|L\otimes H\|))\simeq H^{q}(X,K_{X}\otimes L\otimes H\otimes\mathscr{I}(\varphi_{2}))\simeq\mathcal{H}^{n,q}(X,L\otimes H\otimes\mathscr{I}(\varphi_{2})).
\]

It remains to prove that
\[
\otimes s:\mathcal{H}^{n,q}(X,L\otimes\mathscr{I}(\varphi_{1}))\rightarrow\mathcal{H}^{n,q}(X,L\otimes H\otimes\mathscr{I}(\varphi_{2}))
\]
is a well-defined morphism. If so, the injectivity is obvious. Let $\{\varphi_{\varepsilon,1}\}$ be the regularising sequence of $\varphi_{1}$, and let $\{\varphi_{\varepsilon,2}\}$ be the regularising sequence of $\varphi_{2}$. In particular, they are smooth on an open subvariety $Y$. Consider
\[
\alpha\in\mathcal{H}^{n,q}(X,L\otimes\mathscr{I}(\varphi_{1})).
\]
By definition, there exists a sequence $\{\alpha_{\varepsilon,l}\}$ such that $\Box_{\varepsilon,l}\alpha_{\varepsilon,l}=0$ and $\lim\alpha_{\varepsilon,l}=\alpha$ in the sense of $L^{2}$-topology. In particular, $\lim\partial_{\varphi_{\varepsilon,1}}^{\ast}\alpha_{\varepsilon,l}=0$ and
\[
\lim<i[\Theta_{L,\varphi_{\varepsilon,1}},\Lambda]\alpha_{\varepsilon,l},\alpha_{\varepsilon,l}>_{\varphi_{\varepsilon,1},\tilde{\omega}_{l}}=0
\]
as is shown in the proof of Proposition \ref{p31}. Now for an
\[
s\in H^{0}(X,H),
\]
we claim that $\Box_{0}(s\alpha)=0$. If so, we can verify that $s\alpha\in\mathcal{H}^{n,q}(X,L\otimes H\otimes\mathscr{I}(\|L\otimes H\|))$ as follows: since $s$ is a section of $H$, $s\in\mathfrak{a}(|H|)$. Thus, $s\alpha\in\mathcal{H}^{n,q}(X,L\otimes H\otimes\mathscr{I}(\|L\otimes H\|))$ by Proposition \ref{p21}, (ii).

Now we prove the claim. Observe that $\bar{\partial}(s\alpha)=0$,
\[
[s\alpha]\in H^{q}(X,K_{X}\otimes L\otimes H\otimes\mathscr{I}(\varphi_{2})).
\]
By Proposition \ref{p32}, there exists a sequence $\{\beta_{\varepsilon,l}\}$ on $Y$ such that $\Box_{\varepsilon,l}\beta_{\varepsilon,l}=0$ and $\beta_{\varepsilon,l}\in[s\alpha]$. It is left to prove that $\lim\beta_{\varepsilon,l}=(s\alpha)|_{Y}$ in the sense of $L^{2}$-topology. Indeed, since $\beta_{\varepsilon,l}\in[s\alpha]$, there exits an $(L\otimes H)$-valued $(n,q-1)$-form $\gamma_{\varepsilon,l}$ such that $s\alpha=\beta_{\varepsilon,l}+\bar{\partial}\gamma_{\varepsilon,l}$ on $Y$. From $\Box_{\varepsilon,l}\beta_{\varepsilon,l}=0$, we obtain $\bar{\partial}^{\ast}_{\varphi_{\varepsilon,2}}\beta_{\varepsilon,l}=0$. Now apply the formula (\ref{e32}) on $Y$, we get that
\[
\begin{split}
&\lim\|\bar{\partial}^{\ast}_{\varphi_{\varepsilon,2}}(s\alpha)\|^{2}_{\varphi_{\varepsilon,2},\tilde{\omega}_{l}}\\
=&\lim(\|\partial^{\ast}_{\varphi_{\varepsilon,2}}(s\alpha)\|^{2}_{\varphi_{\varepsilon,2},\tilde{\omega}_{l}}+<i[\Theta_{L\otimes H,\varphi_{\varepsilon,2}}, \Lambda](s\alpha),s\alpha>_{\varphi_{\varepsilon,2},\tilde{\omega}_{l}}).
\end{split}
\]
Since $\partial^{\ast}_{\varphi_{\varepsilon,2}}(s\alpha)=\ast\bar{\partial}\ast(s\alpha)=s\ast\bar{\partial}\ast\alpha=s\partial^{\ast}_{\varphi_{\varepsilon,1}}\alpha$,
\[
\begin{split}
\lim\|\partial^{\ast}_{\varphi_{\varepsilon,2}}(s\alpha)\|^{2}_{\varphi_{\varepsilon,2},\tilde{\omega}_{l}}&=\lim\|s\partial^{\ast}_{\varphi_{\varepsilon,1}}\alpha\|^{2}_{\varphi_{\varepsilon,2},\tilde{\omega}_{l}}\\
&\leqslant C\sup_{X}|s|^{2}e^{-\varphi_{3}}\lim\|\partial^{\ast}_{\varphi_{\varepsilon,1}}\alpha\|^{2}_{\varphi_{\varepsilon,1},\tilde{\omega}_{l}}\\
&=0.
\end{split}
\]
Here $\varphi_{3}$ is the singular metric on $H$ defined by $\mathfrak{a}(|H|)$. Now we explain the inequality. Fix $p\gg0$ and divisible enough that computes both of the multiplier ideals $\mathscr{I}(\|L\|)$ and $\mathscr{I}(\|L\otimes H\|)$. Let $\mu:\tilde{X}\rightarrow X$ be the smooth modification of $\mathfrak{a}(|H|)$, $\mathfrak{a}(|L^{p}|)$ and $\mathfrak{a}(|L^{p}\otimes H^{p}|)$, such that
\[
\mu^{\ast}\mathfrak{a}(|H|)=\mathcal{O}_{\tilde{X}}(-E), \mu^{\ast}\mathfrak{a}(|L^{p}|)=\mathcal{O}_{\tilde{X}}(-F)\textrm{ and }\mu^{\ast}\mathfrak{a}(|L^{p}\otimes H^{p}|)=\mathcal{O}_{\tilde{X}}(-G),
\]
where $E=\sum a_{i}E_{i}$, $F=\sum b_{i}E_{i}$ and $G=\sum c_{i}E_{i}$ have simple normal crossing support. Then for every $i$,
\[
c_{i}\leqslant pa_{i}+b_{i}
\]
and consequently
\[
\lfloor\frac{c_{i}}{p}\rfloor\leqslant a_{i}+\lfloor \frac{b_{i}}{p}\rfloor.
\]
Let $g_{i}$ be the local generator of $E_{i}$. Recall that the associated singular metrics are defined as follows:
\[
\varphi_{1}=\mu_{\ast}(\Pi_{i}\log|g_{i}|^{2\lfloor \frac{b_{i}}{p}\rfloor}), \varphi_{2}=\mu_{\ast}(\Pi_{i}\log|g_{i}|^{2\lfloor \frac{c_{i}}{p}\rfloor})\textrm{ and }\varphi_{3}=\mu_{\ast}(\Pi_{i}\log|g_{i}|^{2a_{i}}).
\]
Obviously, $\varphi_{1}+\varphi_{3}\leqslant\varphi_{2}+C$ for some constant $C$, which leads to the desired inequality. Observe that $\sup_{X}|s|^{2}e^{-\varphi_{3}}$ is bounded, the last equality follows.

In summary, we obtain that $\lim\partial^{\ast}_{\varphi_{\varepsilon,2}}(s\alpha)=0$. Similarly,
\[
\begin{split}
0\leqslant&\lim<i[\Theta_{L\otimes H,\varphi_{\varepsilon,2}},\Lambda](s\alpha),s\alpha>_{\varphi_{\varepsilon,2},\tilde{\omega}_{l}}\\
\leqslant&\sup_{X}|s|^{2}e^{-\varphi_{3}}\lim<i[\frac{1}{\delta}\Theta_{L,\varphi_{\varepsilon,1}},\Lambda]\alpha,\alpha>_{\varphi_{\varepsilon,1},\tilde{\omega}_{l}}\\
=&0.
\end{split}
\]
We obtain that $\lim<i[\Theta_{L\otimes H,\varphi_{\varepsilon,2}},\Lambda](s\alpha),s\alpha>_{\varphi_{\varepsilon,2},\tilde{\omega}_{l}}=0$. Therefore,
\[
\lim\|\bar{\partial}^{\ast}_{\varphi_{\varepsilon,2}}(s\alpha)\|^{2}_{\varphi_{\varepsilon,2},\tilde{\omega}_{l}}=0.
\]

Then we have
\[
\begin{split}
&\lim\|\bar{\partial}^{\ast}_{\varphi_{\varepsilon,2}}\bar{\partial}\gamma_{\varepsilon,l}\|^{2}_{\varphi_{\varepsilon,2},\tilde{\omega}_{l}}\\
=&\lim\|\bar{\partial}^{\ast}_{\varphi_{\varepsilon,2}}(s\alpha-\beta_{\varepsilon,l})\|^{2}_{\varphi_{\varepsilon,2},\tilde{\omega}_{l}}\\
=&0.
\end{split}
\]
In other words, $\lim\bar{\partial}^{\ast}_{\varphi_{\varepsilon,2}}\bar{\partial}\gamma_{\varepsilon,l}=0$. Hence
\[
\lim\|\bar{\partial}\gamma_{\varepsilon,l}\|^{2}_{\varphi_{\varepsilon,2},\tilde{\omega}_{l}}=\lim<\bar{\partial}^{\ast}_{\varphi_{\varepsilon,2}}\bar{\partial}\gamma_{\varepsilon,l},\gamma_{\varepsilon,l}>_{\varphi_{\varepsilon,2},\tilde{\omega}_{l}}=0.
\]
We conclude that $\lim\bar{\partial}\gamma_{\varepsilon,l}=0$. Equivalently, $\lim\beta_{\varepsilon,l}=s\alpha$ on $Y$. The proof is finished.
\end{proof}

One refers to \cite{Fuj12,Ko86a,Ko86b,Mat15a,Mat18} for a partial history of Koll\'{a}r's injectivity theorem.

\section{An asymptotic estimate}
\label{sec:estimate}
In this section we should prove Theorem \ref{t13}. The method is mainly borrowed from \cite{Mat14}. Recall there is the following lemma given in \cite{Mat14}.
\begin{lemma}[(=Lemma 4.3, \cite{Mat14})]
Let $X$ be a projective manifold of dimension $n$. Let $L$ (resp. $\mathcal{G}$) be a line bundle (resp. coherent sheaf) on $X$ and $\{\mathcal{I}_{k}\}^{\infty}_{k=1}$ be ideal sheaves on $X$ with the following assumption:

There exists a very ample line bundle $A$ on $X$ such that $H^{q}(X,A^{m}\otimes\mathcal{G}\otimes L^{k}\otimes\mathcal{I}_{k})=0$ for any $q>0$ and $k, m>0$.

Then for any $q\geqslant0$, we have
\[
h^{q}(X,\mathcal{G}\otimes L^{k}\otimes\mathcal{I}_{k})=O(k^{n-q})\textrm{ as }k\rightarrow\infty.
\]
\end{lemma}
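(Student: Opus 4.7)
My plan is to argue by induction on $n = \dim X$. The inductive step will use a general hyperplane section from $|A|$ to reduce cohomology estimates on $X$ to estimates on a divisor $D$ of dimension $n-1$: the vanishing hypothesis conspires so that (i) the analogous hypothesis holds for the restricted data on $D$, and (ii) the cohomological degree drops by one when passing from $X$ to $D$, which is precisely what delivers the improving bound $O(k^{n-q})$ as $q$ grows.

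Writing $\mathcal{F}_{k} := \mathcal{G} \otimes L^{k} \otimes \mathcal{I}_{k}$, the base case $n = 0$ is immediate. For the inductive step, I would pick a general $s \in H^{0}(X, A)$ with smooth zero locus $D$ and study the short exact sequence
\[
0 \to A^{m-1} \otimes \mathcal{F}_{k} \to A^{m} \otimes \mathcal{F}_{k} \to A^{m}|_{D} \otimes \mathcal{F}_{k}|_{D} \to 0.
\]
The long exact sequence, combined with the hypothesis $H^{q}(A^{\mu} \otimes \mathcal{F}_{k}) = 0$ for $\mu, q > 0$, gives two outputs. For $m \geq 2$ and $q \geq 1$ the outer terms vanish, forcing $H^{q}(D, A^{m}|_{D} \otimes \mathcal{F}_{k}|_{D}) = 0$; this says that the analog of the lemma's hypothesis holds on $D$ with the coherent sheaf $\mathcal{G}|_{D} \otimes A|_{D}$ in place of $\mathcal{G}$. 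For $m = 1$ and $q \geq 1$ the same sequence yields the isomorphism $H^{q+1}(X, \mathcal{F}_{k}) \cong H^{q}(D, A|_{D} \otimes \mathcal{F}_{k}|_{D})$. Applying the induction hypothesis to the right-hand side gives $h^{q+1}(X, \mathcal{F}_{k}) = O(k^{(n-1)-q}) = O(k^{n-(q+1)})$, settling every cohomological degree $\geq 2$.

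For the low degrees, when $q = 1$ the same long exact sequence with $m = 1$ shows that $H^{1}(X, \mathcal{F}_{k})$ is a quotient of $H^{0}(D, A|_{D} \otimes \mathcal{F}_{k}|_{D})$, which is $O(k^{n-1})$ because the $h^{0}$ of any coherent sheaf twisted by $L^{k}$ on an $(n-1)$-dimensional projective variety grows at most polynomially of degree $n-1$. The case $q = 0$ is direct: $h^{0}(X, \mathcal{F}_{k}) \leq h^{0}(X, \mathcal{G} \otimes L^{k}) = O(k^{n})$.

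The main obstacle I anticipate is the uniform choice of $s$. The short exact sequence above is honestly exact only when $s$ acts as a non-zero-divisor on $\mathcal{F}_{k}$, and the associated primes of the $\mathcal{I}_{k}$ may vary with $k$, so a single generic section need not work for every $k$ simultaneously. My intended resolution is to enlarge the short exact sequence to accommodate a torsion subsheaf $\mathcal{T}_{k} \subset \mathcal{F}_{k}$ supported along $D$, and to apply the inductive hypothesis to $\mathcal{T}_{k}$ viewed as a coherent sheaf on $D$ (after verifying the analogous vanishing hypothesis for it); this contributes only lower-order cohomological terms of order $O(k^{n-1-q})$, which are absorbed into the target bound $O(k^{n-q})$.
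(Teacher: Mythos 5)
The paper does not actually prove this lemma: it is quoted verbatim from \cite{Mat14} (Lemma~4.3 there) and used as a black box, so there is no internal argument to compare against. Your overall strategy---induction on $\dim X$ by restricting to a general member $D\in|A|$, using the vanishing hypothesis both to show that the hypothesis descends to $D$ and to extract the isomorphism $H^{q+1}(X,\mathcal{F}_{k})\cong H^{q}(D,A\otimes\mathcal{F}_{k}|_{D})$ for $q\geqslant1$ from the long exact sequence with $m=1$---is the standard route for this kind of asymptotic estimate and is essentially Matsumura's; the degree bookkeeping in your middle paragraphs is correct.

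The genuine gap is in your proposed fix for the non-zero-divisor problem. If you set $\mathcal{T}_{k}=\ker(s:\mathcal{F}_{k}\to A\otimes\mathcal{F}_{k})$ and plan to ``apply the inductive hypothesis to $\mathcal{T}_{k}$,'' nothing entitles you to do so: the hypothesis of the lemma is a vanishing statement about the sheaves $A^{m}\otimes\mathcal{F}_{k}$, and there is no reason that $H^{q}(X,A^{m}\otimes\mathcal{T}_{k})=0$, so the cohomology of $\mathcal{T}_{k}$ is simply not controlled by the induction. The correct and much simpler fix is to take $D$ \emph{very general}: the sheaves $\mathcal{F}_{k}$ and $\mathcal{O}_{X}/\mathcal{I}_{k}$ form a countable family, each with finitely many associated subvarieties, and over $\mathbb{C}$ a very general $s\in H^{0}(X,A)$ avoids the countable union of proper linear subspaces of sections vanishing along one of them; this single $D$ is then a non-zero-divisor on every $\mathcal{F}_{k}$ simultaneously, all your short exact sequences are honest, and $\mathcal{I}_{k}\cdot\mathcal{O}_{D}$ is an honest ideal sheaf on $D$ so the restricted data again has the shape required by the lemma. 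Two smaller points should also be flagged. First, your base estimate $h^{0}(X,\mathcal{F}_{k})\leqslant h^{0}(X,\mathcal{G}\otimes L^{k})$ presumes that $\mathcal{G}\otimes L^{k}\otimes\mathcal{I}_{k}\to\mathcal{G}\otimes L^{k}$ is injective, which can fail for non-locally-free $\mathcal{G}$ because of a $\mathscr{T}or_{1}(\mathcal{G},\mathcal{O}_{X}/\mathcal{I}_{k})$ kernel; this is harmless in the present paper, which only invokes the lemma after reducing to locally free $\mathcal{G}$ by a resolution, but it deserves a sentence. Second, the bound $h^{0}(X,\mathcal{G}\otimes L^{k})=O(k^{n})$ for a line bundle $L$ that is not ample is itself not free: it needs the decomposition $L=A'\otimes B^{-1}$ with $A',B$ very ample and an injection obtained from a general member of $|B^{k}|$, exactly as in Proposition 2.2 of the paper.
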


Apart from this, we also need the following generalisation of Nadel's vanishing theorem.
\begin{theorem}\label{t41}
Let $X$ be a projective manifold of dimension $n$. Let $(E,H)$ be a Nakano (resp. semi-)positive \cite{Dem12} vector bundle on $X$, and let $(L,\varphi)$ be a (resp. big) pseudo-effective line bundle on $X$. Then
\[
H^{q}(X,K_{X}\otimes E\otimes L\otimes\mathscr{I}(\varphi))=0
\]
for $q>0$.
\begin{proof}
Let $\{\varphi_{\varepsilon}\}$ be the regularising sequence mentioned at the beginning of Sect.\ref{sec:harmonic}. Then
\[
\{H\otimes e^{-\varphi_{\varepsilon}}\}
\]
is a regularising approximation of $H\otimes e^{-\varphi}$. In particular, $H\otimes e^{-\varphi_{\varepsilon}}$ is smooth outside a closed subvariety $Z_{\varepsilon}$. Let $D^{1,0}_{\varepsilon}$ be the $(1,0)$-part of the Chern connection on $(E\otimes L)|_{X-Z_{\varepsilon}}$ associated with $H\otimes e^{-\varphi_{\varepsilon}}$. Following the same argument in Sect.\ref{sec:hodge}, we can easily generalise Proposition \ref{p32} (with the same notations there) as follows:
\[
\mathcal{H}^{n,q}(X,E\otimes L\otimes\mathscr{I}(\varphi))\simeq H^{n,q}(X,E\otimes L\otimes\mathscr{I}(\varphi)).
\]
Now for any $\alpha\in\mathcal{H}^{n,q}(X,E\otimes L\otimes\mathscr{I}(\varphi))$, we apply formula (\ref{e32}) with $\eta=1$, $\chi=\varphi_{\varepsilon}$ on $X-Z_{\varepsilon}$ to obtain that
\[
0=\|(D^{1,0}_{\varepsilon})^{\ast}\alpha\|^{2}_{\varphi_{\varepsilon}}+<i[\Theta_{E\otimes L,H\otimes e^{-\varphi_{\varepsilon}}},\Lambda]\alpha,\alpha>_{\varphi_{\varepsilon}}.
\]
Although formula (\ref{e32}) is formulated for a line bundle, we can arrange all the things for a higher rank vector bundle without obstacle. It is easy to verify that $<i[\Theta_{E\otimes L,H\otimes e^{-\varphi_{\varepsilon}}},\Lambda]\alpha,\alpha>_{\varphi_{\varepsilon}}>0$ if $\varepsilon\ll1$ and $\alpha\neq0$, which is a contradiction. Therefore we must have $\alpha=0$. Equivalently,
\[
H^{q}(X,K_{X}\otimes E\otimes L\otimes\mathscr{I}(\varphi))=0.
\]
\end{proof}
\end{theorem}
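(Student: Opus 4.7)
The plan is to carry out the same harmonic-theoretic strategy that proved Proposition \ref{p31} in the line-bundle case, with the vector bundle $E$ absorbed into the curvature term. First, I would extend the Hodge isomorphism of Proposition \ref{p32} to the twisted coefficient system $E\otimes L$; nothing essential changes, since the regularising sequence $\{\varphi_{\varepsilon}\}$ of $\varphi$ combines with the fixed smooth metric $H$ on $E$ to yield a sequence $\{H\otimes e^{-\varphi_{\varepsilon}}\}$ of smooth metrics on $E\otimes L$ (away from $Z_{\varepsilon}$) approximating $H\otimes e^{-\varphi}$, and the construction of $\Box_{0}$-harmonic representatives goes through verbatim since the Bochner identity is written intrinsically in terms of the Chern curvature. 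In particular,
\[
\mathcal{H}^{n,q}(X,E\otimes L\otimes\mathscr{I}(\varphi))\simeq H^{q}(X,K_{X}\otimes E\otimes L\otimes\mathscr{I}(\varphi)),
\]
so it suffices to show $\mathcal{H}^{n,q}(X,E\otimes L\otimes\mathscr{I}(\varphi))=0$ for $q>0$.

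Next, I would apply the generalised Kodaira--Akizuki--Nakano identity (\ref{e32}) on $Y=X\setminus Z$ with $\eta=1$ and $\chi=\varphi_{\varepsilon}$, now for the bundle $E\otimes L$ equipped with the metric $H\otimes e^{-\varphi_{\varepsilon}}$, to a $\Box_{0}$-harmonic form $\alpha$. As in Proposition \ref{p31}, the $\bar{\partial}$- and $\bar{\partial}^{\ast}$-terms vanish for bidegree reasons together with the harmonicity, yielding
\[
0=\|(D^{1,0}_{\varepsilon})^{\ast}\alpha\|^{2}_{\varphi_{\varepsilon}}+\langle i[\Theta_{E\otimes L,H\otimes e^{-\varphi_{\varepsilon}}},\Lambda]\alpha,\alpha\rangle_{\varphi_{\varepsilon}}.
\]
Decomposing $\Theta_{E\otimes L,H\otimes e^{-\varphi_{\varepsilon}}}=\Theta_{E,H}\otimes\mathrm{id}_{L}+\mathrm{id}_{E}\otimes\Theta_{L,\varphi_{\varepsilon}}$ and using $i\Theta_{L,\varphi_{\varepsilon}}\geqslant-\varepsilon\omega$, I would split into the two announced cases. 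When $(E,H)$ is Nakano positive, its strict positivity on $(n,q)$-forms with $q>0$ dominates the $-\varepsilon\omega$ error for $\varepsilon\ll1$; when $(L,\varphi)$ is big and $(E,H)$ only Nakano semi-positive, one chooses $\varphi$ so that $i\Theta_{L,\varphi}\geqslant c\omega$ as a current for some $c>0$, and arranges the regularisation so that $i\Theta_{L,\varphi_{\varepsilon}}\geqslant(c-\varepsilon)\omega$. Either way, the curvature pairing is strictly positive unless $\alpha\equiv0$, forcing $\mathcal{H}^{n,q}=0$.

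The main obstacle I anticipate is the bigness case: the default regularisation satisfies only $i\Theta_{L,\varphi_{\varepsilon}}\geqslant-\varepsilon\omega$, which is weaker than what is required to combine with a merely Nakano semi-positive $E$. One must invoke a finer approximation available when $L$ is big, for instance writing $L=A\otimes B^{-1}$ with $A$ ample and $B$ effective and absorbing the singular locus of $B$ into $Z$ and into the support of $\mathscr{I}(\varphi)$, to secure a uniform lower bound $i\Theta_{L,\varphi_{\varepsilon}}\geqslant c\omega-\varepsilon\omega$ compatible with properties (a')--(d). Once this strict positivity is in hand, the Bochner argument closes exactly as in Proposition \ref{p31}, with the only novelty being the bookkeeping needed to verify that formula (\ref{e32}) extends to higher rank coefficients and that the limiting process in $\varepsilon$ and $l$ preserves the lower bound on the curvature term.
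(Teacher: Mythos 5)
Your proposal follows the same route as the paper: extend the Hodge-type isomorphism of Proposition \ref{p32} to $E\otimes L$, apply the Bochner identity (\ref{e32}) with $\eta=1$ to a $\Box_{0}$-harmonic representative, and kill it by strict positivity of the curvature term for $\varepsilon\ll1$. Your extra care in the big case --- noting that one needs a regularisation preserving the strict lower bound $i\Theta_{L,\varphi_{\varepsilon}}\geqslant c\omega-\varepsilon\omega$ rather than the bare $-\varepsilon\omega$ of property (d) --- is a correct and worthwhile elaboration of the step the paper dismisses as ``easy to verify.''
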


\begin{proof}[Proof of Theorem \ref{t13}]
Firstly, we prove it when $\mathcal{G}$ is locally free, i.e it is isomorphic to a vector bundle $E$. It is sufficient to show that there exists a very ample line bundle $A$ on $X$ (independent of $k$) such that
\[
H^{q}(X,A^{m}\otimes E\otimes L^{k}\otimes\mathscr{I}(\|L^{k}\|))=0
\]
for $q>0$ and $k, m>0$. By taking a sufficiently ample line bundle $A$ on $X$, we may assume that $A$ is very ample and $K_{X}^{-1}\otimes E\otimes A$ is positive in the sense of Nakano. Since $A$ is ample, there is a smooth metric $\psi$ on $A$ with strictly positive curvature.

Now let $\varphi_{k}$ be the singular metric on $L^{k}$ (see Sect.\ref{sec:asymptotic}) such that
\[
\mathscr{I}(\|L^{k}\|)=\mathscr{I}(\varphi_{k}).
\]
Then $(A^{m}\otimes L^{k},m\psi+\varphi_{k})$ is a big line bundle for all $m$ and $k$. By Theorem \ref{t41}, we obtain
\[
H^{q}(X,A^{m}\otimes E\otimes L^{k}\otimes\mathscr{I}(\|L^{k}\|))=H^{q}(X,K_{X}\otimes K^{-1}_{X}\otimes A^{m}\otimes E\otimes L^{k}\otimes\mathscr{I}(\varphi_{k}))=0.
\]
Here we use the fact that $\mathscr{I}(m\psi+\varphi_{k})=\mathscr{I}(\varphi_{k})$.

In the end, for a general $\mathcal{G}$, there exits a free resolution \cite{Har77}
\[
0\rightarrow E_{m}\rightarrow\cdots\rightarrow E_{1}\rightarrow\mathcal{G}\rightarrow0.
\]
We briefly explain the existence of such a resolution. Indeed, we can assume without loss of generality that $\mathcal{G}$ is globally generated by tensoring with $A$. Then the existence is elementary (see \cite{Kob87} for a suggestive argument). From this resolution, we eventually obtain the desired vanishing result for $\mathcal{G}$. The proof is finished.
\end{proof}

\section{Vanishing theorem}
\label{sec:applications}
Firstly, we prove Theorem \ref{t11}.
\begin{proof}[Proof of Theorem \ref{t11}]
All this really makes sense only if $\kappa(L)\geqslant0$, so we shall assume this.

Next, we prove the vanishing result by contradiction. Firstly, we claim that if
\[
H^{n,q}(X,L\otimes\mathscr{I}(\|L\|))
\]
is non-zero,
\[
h^{0}(L^{k-1})\leqslant\dim H^{n,q}(X,L^{k}\otimes\mathscr{I}(\|L^{k}\|)).
\]
In fact, let $\{s_{j}\}$ be a basis of $H^{0}(X,L^{k-1})$. Then for any
\[
\alpha\in H^{n,q}(X,L\otimes\mathscr{I}(\|L\|)),
\]
$\{s_{j}\alpha\}$ is linearly independent in $H^{n,q}(X,L^{k}\otimes\mathscr{I}(\|L^{k}\|))$ by Theorem \ref{t12}. Indeed, let $\varphi_{1}$ and $\varphi_{k}$ be the  metrics on $L$ and $L^{k}$ associated to $\mathscr{I}(\|L\|)$ and $\mathscr{I}(\|L^{k}\|)$ respectively. As a by-product of Proposition \ref{p21}, (iii), we have $\varphi_{k}=k\varphi_{1}$. So $i\Theta_{L^{k},\varphi_{k}}=ki\Theta_{L^{k},\varphi_{1}}$. Thus, Theorem \ref{t12} applies here, and it leads to the inequality.

Now suppose that $H^{n,q}(X,L\otimes\mathscr{I}(\|L\|))$ is non-zero for $q>n-\kappa(L)$. We have
\[
  h^{0}(L^{k-1})=h^{0}(L^{k-1}\otimes\mathscr{I}(\|L^{k-1}\|))\leqslant \dim H^{n,q}(X,L^{k}\otimes\mathscr{I}(\|L^{k}\|)).
\]
The first equality comes from the Proposition \ref{p21}, (i), and the second inequality is due to the claim. By the definition of Iitaka dimension \cite{Laz04}, we have
\[
  \limsup_{k\rightarrow\infty}\frac{h^{0}(L^{k-1})}{(k-1)^{\kappa(L)}}>0.
\]
It means that
\[
  \limsup_{k\rightarrow\infty}\frac{\dim H^{n,q}(X,L^{k}\otimes\mathscr{I}(\|L^{k}\|))}{(k-1)^{\kappa(L)}}>0.
\]
On the other hand, we have
\[
  \dim H^{n,q}(X,L^{k}\otimes\mathscr{I}(\|L^{k}\|))=O(k^{n-q})
\]
by Theorem \ref{t13}, so $n-q\geqslant\kappa(L)$. It contradicts to the fact that $q>n-\kappa(L)$. Hence
\[
H^{n,q}(X,L^{k}\otimes\mathscr{I}(\|L^{k}\|))=0
\]
for $q>n-\kappa(L)$.
\end{proof}

\section{Further discussion}
\label{sec:further}
In order to prove Theorem \ref{t15}, we need to extend Theorems \ref{t12} and \ref{t13} to the relative setting.

\subsection{The harmonic forms: local theory}
\label{sec:local}
Let $(X,\omega)$ be a compact K\"{a}hler manifold of dimension $n$, and let $L$ be a line bundle on $X$.

Firstly, we recall the harmonic theory in a local setting \cite{Tak95}. Let $V$ be a bounded domain with smooth boundary $\partial V$ on $X$. Moreover, there is a smooth plurisubharmonic exhaustion function $r$ of $V$ on $X$, with $\sup_{X}(|r|+|dr|)<\infty$. In particular, $V=\{r<0\}$ and $dr\neq0$ on $\partial V$. The volume form $dS$ of the real hypersurface $\partial V$ is defined by $dS:=\ast(dr)/|dr|_{\omega}$. Let $\varphi$ be a smooth Hermitian metric on $L$. Let $L^{p,q}_{(2)}(V,L)_{\varphi,\omega}$ be the space of $L$-valued $(p,q)$-forms on $V$ which are $L^{2}$-bounded with respect to $\varphi,\omega$. Setting $\tau:=dS/|dr|_{\omega}$ we define the inner product on $\partial V$ by
\[
[\alpha,\beta]_{\varphi}:=\int_{\partial V}<\alpha,\beta>_{\varphi}\tau
\]
for $\alpha,\beta\in L^{p,q}_{(2)}(V,L)_{\varphi,\omega}$. For a smooth $(p,q)$-form $\gamma$, let $e(\gamma)$ be the morphism $\gamma\wedge\cdot$. Then by Stokes' theorem we have the following:
\begin{equation}\label{e61}
\begin{split}
<\bar{\partial}\alpha,\beta>_{\varphi}&=<\alpha,\bar{\partial}^{\ast}\beta>_{\varphi}+[\alpha,e(\bar{\partial}r)^{\ast}\beta]_{\varphi}\\
<\partial_{\varphi}\alpha,\beta>_{\varphi}&=<\alpha,\partial^{\ast}_{\varphi}\beta>_{\varphi}+[\alpha,e(\partial r)^{\ast}\beta]_{\varphi}
\end{split}
\end{equation}
where $\bar{\partial}^{\ast},\partial^{\ast}_{\varphi}$ are the adjoint operators defined on $X$.

Now we furthermore assume that $i\Theta_{L,\varphi}\geqslant -C\omega$. Based on (\ref{e61}), if $e(\bar{\partial}r)^{\ast}\alpha=0$, it is proved in \cite{Tak95} that the Bochner formula on $V$ can be formulated as
\begin{equation}\label{e62}
\begin{split}
&\|\sqrt{\eta}(\bar{\partial}+e(\bar{\partial}\chi))\alpha\|^{2}_{\varphi,\omega}+\|\sqrt{\eta}\bar{\partial}^{\ast}\alpha\|^{2}_{\varphi,\omega}=\|\sqrt{\eta}(\partial^{\ast}_{\varphi}-e(\partial\chi)^{\ast})\alpha)\|^{2}_{\varphi,\omega}\\
&+\|\sqrt{\eta}\partial_{\varphi}\alpha\|^{2}_{\varphi,\omega}+<\eta i[\Theta_{L,\varphi}+\partial\bar{\partial}\chi,\Lambda]\alpha,\alpha>_{\varphi,\omega}.
\end{split}
\end{equation}
where $\eta$ is a positive smooth function on $X$ with $\chi:=\log\eta$.

We then define the space of harmonic forms on $V$ by
\[
\mathcal{H}^{n,q}_{\varphi}(V,L,r,\omega):=\{\alpha\in L^{n,q}_{(2)}(\bar{V},L)_{\varphi,\omega};\bar{\partial}\alpha=\bar{\partial}^{\ast}\alpha=e(\bar{\partial}r)^{\ast}\alpha=0\}.
\]

Now $(L,\varphi)$ is a pseudo-effective line bundle. Assume that there exits integers $k_{0}$, $m$ and sections $s_{1},...,s_{m}\in L^{k_{0}}$ such that
\[
(|s_{1}|^{2}+\cdots+|s_{m}|^{2})e^{-k_{0}\varphi}
\]
is bounded on $X$. Let $\{\varphi_{\varepsilon}\}$ be the regularising sequence given at the beginning of Sect.\ref{sec:harmonic}. Using the same notations there, the harmonic space with respect to $\varphi$ is defined as
\[
\begin{split}
\mathcal{H}^{n,q}(V,L\otimes\mathscr{I}(\varphi),r):=&\{\alpha\in L^{n,q}_{(2)}(V,L)_{\varphi,\omega};\textrm{there exits }\alpha_{l,\varepsilon}\in[\alpha]\textrm{ such that }\\
&\alpha_{l,\varepsilon}\in\mathcal{H}^{n,q}_{\varphi_{\varepsilon}}(V-Z,L,r,\tilde{\omega}_{l})\textrm{ and }\alpha_{l,\varepsilon}\rightarrow\alpha\textrm{ in }L^{2}\textrm{-limit}\}.
\end{split}
\]
We then generalise the work in \cite{Tak95} here.
\begin{proposition}\label{p61}
We have the following conclusions:
\begin{enumerate}
  \item Assume $\alpha\in L^{n,q}_{(2)}(X,L)_{\varphi,\omega}$ satisfied $e(\bar{\partial}r)^{\ast}\alpha=0$ on $V$. Then $\alpha$ satisfies $\bar{\partial}\alpha=\lim\bar{\partial}^{\ast}_{\varphi_{\varepsilon}}\alpha=0$ on $V$ if and only if $\bar{\partial}\ast\alpha=0$ and $\lim<ie(\Theta_{L,\varphi_{\varepsilon}}+\partial\bar{\partial}r)\Lambda\alpha,\alpha>_{\varphi_{\varepsilon}}=0$ on $V$.
  \item $\mathcal{H}^{n,q}(V,L\otimes\mathscr{I}(\varphi),r)$ is independent of the choice of exhaustion function $r$.
  \item $\mathcal{H}^{n,q}(V,L\otimes\mathscr{I}(\varphi),r)\simeq H^{q}(V,K_{V}\otimes L\otimes\mathscr{I}(\varphi))$.
  \item For Stein open subsets $V_{1},V_{2}$ in $V$ such that $V_{2}\subset V_{1}$, the restriction map
  \[
  \mathcal{H}^{n,q}(V_{1},\mathscr{I}(\varphi),r)\rightarrow\mathcal{H}^{n,q}(V_{2},\mathscr{I}(\varphi),r)
  \]
  is well-defined, and further it satisfies the following commutative diagram:
 \begin{equation*}
  \xymatrix{
  \mathcal{H}^{n,q}(V_{1},\mathscr{I}(\varphi),r) \ar[d]_{i^{1}_{2}} \ar[r]^{S^{q}_{V_{1}}} & H^{0}(V_{1},\Omega^{n-q}_{Y}\otimes \mathscr{I}(\varphi)) \ar[d] \\
  \mathcal{H}^{n,q}(V_{2},\mathscr{I}(\varphi),r) \ar[r]^{S^{q}_{V_{2}}} & H^{0}(V_{2},\Omega^{n-q}_{Y}\otimes \mathscr{I}(\varphi)).
  }
 \end{equation*}

\end{enumerate}
\begin{proof}
The proof uses the same argument as Theorems 4.3 and 5.2 in \cite{Tak95} with minor adjustment. So we only provide the necessary details.

(i) Let $\psi=\varphi+r$ and $\psi_{\varepsilon}=\varphi_{\varepsilon}+r$. If $\bar{\partial}\alpha=\lim\bar{\partial}^{\ast}_{\varphi_{\varepsilon}}\alpha=0$, then $\lim\bar{\partial}^{\ast}_{\psi_{\varepsilon}}\alpha=0$ and so $\lim\Box_{\psi_{\varepsilon}}\alpha=0$. By formula (\ref{e62}) we obtain
\[
\lim(\|\partial^{\ast}_{\psi_{\varepsilon}}\alpha\|^{2}_{\psi_{\varepsilon}}+<ie(\Theta_{L,\varphi_{\varepsilon}}+\partial\bar{\partial}r)\Lambda\alpha,\alpha>_{\psi_{\varepsilon}})=0
\]
on $V$. Since $<ie(\Theta_{L,\varphi_{\varepsilon}})\Lambda\alpha,\alpha>_{\psi_{\varepsilon}}\geqslant-\varepsilon\omega$ and
\[
[ie(\partial\bar{\partial}r)\Lambda\alpha,\alpha]_{\psi_{\varepsilon}}\geqslant0,
\]
the equality above implies that
\[
\begin{split}
&\ast\bar{\partial}\ast\alpha=0\textrm{ and }\\
&\lim<ie(\Theta_{L,\varphi_{\varepsilon}})\Lambda\alpha,\alpha>_{\psi_{\varepsilon}}=\lim[ie(\partial\bar{\partial}r)\Lambda\alpha,\alpha]_{\psi_{\varepsilon}}=0.
\end{split}
\]
Equivalently,
\[
\begin{split}
&\bar{\partial}\ast\alpha=0\textrm{ and }\\
&\lim<ie(\Theta_{L,\varphi_{\varepsilon}}+\partial\bar{\partial}r)\Lambda\alpha,\alpha>_{\varphi_{\varepsilon}}=0.
\end{split}
\]
The necessity is proved.

Now assume that $\bar{\partial}\ast\alpha=0$ and $\lim<ie(\Theta_{L,\varphi_{\varepsilon}}+\partial\bar{\partial}r)\Lambda\alpha,\alpha>_{\varphi_{\varepsilon}}=0$. Since $r$ is plurisubharmonic and $\lim<ie(\Theta_{L,\varphi_{\varepsilon}})\Lambda\alpha,\alpha>_{\varphi_{\varepsilon}}\geqslant0$, we have
\[
\lim<ie(\partial\bar{\partial}r)\Lambda\alpha,\alpha>_{\varphi_{\varepsilon}}=\lim<ie(\Theta_{L,\varphi_{\varepsilon}})\Lambda\alpha,\alpha>_{\varphi_{\varepsilon}}=0.
\]
By formula (\ref{e62}) we have $\bar{\partial}\alpha=\lim\bar{\partial}^{\ast}_{\varphi_{\varepsilon}}\alpha=0$.

(ii) Let $\tau$ be an arbitrary smooth plurisubharmonic function on $V$. Donnelly and Xavier's formula \cite{DoX84} implies that $\bar{\partial}e(\bar{\partial}\tau)^{\ast}\alpha=ie(\partial\bar{\partial}\tau)\Lambda\alpha$ if
\[
\alpha\in\mathcal{H}^{n,q}(V,L\otimes\mathscr{I}(\varphi),r).
\]
Therefore
\[
\begin{split}
<ie(\partial\bar{\partial}\tau)\Lambda\alpha,\alpha>_{\varphi_{\varepsilon}-\tau}&=<\bar{\partial}e(\bar{\partial}\tau)^{\ast}\alpha,\alpha>_{\varphi_{\varepsilon}-\tau}\\
&=<e(\bar{\partial}\tau)^{\ast}\alpha,\bar{\partial}^{\ast}_{\varphi_{\varepsilon}-\tau}\alpha>_{\varphi_{\varepsilon}-\tau}\\
&=<e(\bar{\partial}\tau)^{\ast}\alpha,\bar{\partial}^{\ast}_{\varphi_{\varepsilon}}\alpha>_{\varphi_{\varepsilon}-\tau}-\|e(\bar{\partial}\tau)^{\ast}\alpha\|^{2}_{\varphi_{\varepsilon}-\tau}.
\end{split}
\]
Here we use $\bar{\partial}^{\ast}_{\varphi_{\varepsilon}-\tau},\bar{\partial}^{\ast}_{\varphi_{\varepsilon}}$ to denote the adjoint operators with respect to $\varphi_{\varepsilon}-\tau$ and $\varphi_{\varepsilon}$. Take the limit with respect to $\varepsilon$, we then obtain that
\[
<ie(\partial\bar{\partial}\tau)\Lambda\alpha,\alpha>_{\varphi-\tau}=-\|e(\bar{\partial}\tau)^{\ast}\alpha\|^{2}_{\varphi-\tau}.
\]
Notice that $\tau$ is plurisubharmonic, we actually have
\[
<ie(\partial\bar{\partial}\tau)\Lambda\alpha,\alpha>_{\varphi-\tau}=\|e(\bar{\partial}\tau)^{\ast}\alpha\|^{2}_{\varphi-\tau}=0.
\]
Combine with (i), we eventually obtain that
\[
\mathcal{H}^{n,q}(V,L\otimes\mathscr{I}(\varphi),r)=\mathcal{H}^{n,q}(V,L\otimes\mathscr{I}(\varphi),r+\tau)
\]
for any smooth plurisubharmonic $\tau$, hence the desired conclusion.

(iii) When $\varphi$ is smooth, it is proved in \cite{Tak95}, Theorem 4.5, (b). When $\varphi$ is singular, we could apply Theorem 4.5, (b) in \cite{Tak95} to its regularising sequence to obtain the desired conclusion. This approximation argument is similar with Proposition \ref{p32}, and we omit the details here.

(iv) is intuitive due to the discussions in the global setting. In particular, $S^{q}_{V_{i}}$ with $i=1,2$ are similarly defined as in the proof of Proposition \ref{p32}.
\end{proof}
\end{proposition}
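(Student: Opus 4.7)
The plan is to mirror the global theory of Section \ref{sec:harmonic} (Propositions \ref{p31} and \ref{p32}) in the bounded-domain setting, replacing the Bochner identity (\ref{e32}) by its boundary version (\ref{e62}) and pushing Demailly's regularising sequence $\{\varphi_{\varepsilon}\}$ through the same limit arguments already used in the proof of Proposition \ref{p32}. The standing hypothesis $e(\bar{\partial}r)^{\ast}\alpha = 0$ is precisely what kills the boundary integrals in Stokes' identities (\ref{e61}), so the adjoint $\bar{\partial}^{\ast}_{\psi_{\varepsilon}}$ with respect to the shifted weight $\psi_{\varepsilon} := \varphi_{\varepsilon} + r$ agrees with $\bar{\partial}^{\ast}_{\varphi_{\varepsilon}}$ when applied to such $\alpha$, which is what makes the Bochner identity on $V$ tractable across the whole regularising sequence.

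For (i), I would set $\chi = r$, $\eta = 1$ in (\ref{e62}) and use $\bar{\partial}\alpha = \lim\bar{\partial}^{\ast}_{\varphi_{\varepsilon}}\alpha = 0$ together with $e(\bar{\partial}r)^{\ast}\alpha = 0$ to reduce the identity to
\[
\lim\bigl(\|\partial^{\ast}_{\psi_{\varepsilon}}\alpha\|^{2}_{\psi_{\varepsilon}} + \langle i[\Theta_{L,\varphi_{\varepsilon}} + \partial\bar{\partial}r,\Lambda]\alpha,\alpha\rangle_{\psi_{\varepsilon}}\bigr) = 0.
\]
Since $i\Theta_{L,\varphi_{\varepsilon}} \geqslant -\varepsilon\omega$ and $i\partial\bar{\partial}r \geqslant 0$ by plurisubharmonicity, the two pieces are asymptotically nonnegative and so must vanish separately in the limit. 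From $\lim\partial^{\ast}_{\varphi_{\varepsilon}}\alpha = 0$ one reads off $\bar{\partial}\ast\alpha = 0$ exactly as in the proof of Proposition \ref{p31}. The converse direction is obtained by reversing the same chain: plurisubharmonicity of $r$ plus $\lim\langle i[\Theta_{L,\varphi_{\varepsilon}} + \partial\bar{\partial}r,\Lambda]\alpha,\alpha\rangle_{\varphi_{\varepsilon}} = 0$ forces each curvature term to vanish, and then (\ref{e62}) returns $\bar{\partial}\alpha = \lim\bar{\partial}^{\ast}_{\varphi_{\varepsilon}}\alpha = 0$.

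For (ii), I would follow Takegoshi's substitution trick: replace $\varphi_{\varepsilon}$ by $\varphi_{\varepsilon} - \tau$ for an arbitrary smooth plurisubharmonic $\tau$ and apply the Donnelly--Xavier commutator $\bar{\partial}\,e(\bar{\partial}\tau)^{\ast} - e(\bar{\partial}\tau)^{\ast}\bar{\partial} = ie(\partial\bar{\partial}\tau)\Lambda$ to harmonic $\alpha$. Passing to the limit yields the identity
\[
\langle ie(\partial\bar{\partial}\tau)\Lambda\alpha,\alpha\rangle_{\varphi-\tau} = -\|e(\bar{\partial}\tau)^{\ast}\alpha\|^{2}_{\varphi-\tau},
\]
and plurisubharmonicity of $\tau$ forces both sides to vanish. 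Part (i) applied to the new exhaustion $r + \tau$ then shows $\alpha$ is harmonic with respect to $r + \tau$, and independence from the choice of $r$ follows. For (iii), I would invoke Takegoshi's Theorem 4.5(b) from \cite{Tak95} for each smooth metric $\varphi_{\varepsilon}$, then pass to the $L^{2}$-limit in $\varepsilon, l$ using the $S^{q}$-extension exactly as in the proof of Proposition \ref{p32}, leveraging property (c) $\mathscr{I}(\varphi_{\varepsilon}) = \mathscr{I}(\varphi)$ to identify the target cohomology. Part (iv) is essentially automatic: restriction preserves $L^{2}$-boundedness and $\bar{\partial}$-closedness of representatives, and the morphism $S^{q}$ is defined pointwise on $V - Z$, so the diagram commutes by construction.

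The main obstacle I expect is technical rather than conceptual: I must ensure every limiting identity on $V$ survives in the presence of the common pole-set $Z$ of $\{\varphi_{\varepsilon}\}$ and that the boundary integrals on $\partial V$ behave well as $\varepsilon \to 0$ and $l \to \infty$. Condition (a') places $Z$ in positive codimension and keeps $\partial V$ disjoint from $Z$ for any reasonable defining function $r$, so Stokes' identities (\ref{e61}) are legitimate and the completeness of $\tilde{\omega}_{l}$ on $V - Z$ allows the usual Andreotti--Vesentini-style closedness and density arguments to run. The subtlest point is justifying the integration by parts against the perturbed weight $\varphi_{\varepsilon} - \tau$ in part (ii) and verifying that the resulting sign in the Donnelly--Xavier identity survives the passage to the singular limit; once this is in hand, the remaining steps assemble in parallel with the global arguments already carried out in Section \ref{sec:harmonic}.
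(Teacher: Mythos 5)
Your proposal follows the paper's proof essentially verbatim: part (i) via the boundary Bochner identity (\ref{e62}) with the weight shifted by $r$ and separate vanishing of the two asymptotically nonnegative terms, part (ii) via the Donnelly--Xavier commutator applied with the perturbed weight $\varphi_{\varepsilon}-\tau$ and the sign argument from plurisubharmonicity of $\tau$, part (iii) by regularising Takegoshi's Theorem 4.5(b) as in Proposition \ref{p32}, and part (iv) by the global construction of $S^{q}$. The only slip is writing ``$\chi=r$, $\eta=1$'' --- since $\chi=\log\eta$ you mean $\eta=e^{r}$, i.e.\ the shifted weight $\psi_{\varepsilon}=\varphi_{\varepsilon}+r$ used in the paper --- which does not affect the argument.
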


In the rest part of this paper, we are always working on the setting in Theorem \ref{t15}. Namely, let $f:X\rightarrow Y$ be a surjective morphism between projective manifolds, and let $L$ be a pseudo-effective line bundle on $X$. Let $l$ be the dimension of a general fibre $F$ of $f$.

Let $\{U,r_{U}\}$ be a finite Stein covering of $Y$ with smooth strictly plurisubharmonic exhaustion function $r_{U}$. Let
\[
\mathcal{H}^{n,q}(f^{-1}(U),L\otimes\mathscr{I}(\varphi),f^{\ast}r_{U})
\]
be the harmonic space defined above. Then the data
\[
\{\mathcal{H}^{n,q}(f^{-1}(U),L\otimes\mathscr{I}(\varphi),f^{\ast}r_{U}),i^{1}_{2}\}
\]
with the restriction morphisms
\[
i^{1}_{2}:\mathcal{H}^{n,q}(f^{-1}(U_{1}),L\otimes\mathscr{I}(\varphi),f^{\ast}r_{U_{1}})\rightarrow\mathcal{H}^{n,q}(f^{-1}(U_{2}),L\otimes\mathscr{I}(\varphi),f^{\ast}r_{U_{2}}),
\]
$(U_{2},r_{U_{2}})\subset(U_{1},r_{U_{1}})$, yields a presheaf \cite{Har77} on $Y$ by Proposition \ref{p61}, (iv). We denote the associated sheaf by $f_{\ast}\mathcal{H}^{n,q}(L\otimes\mathscr{I}(\varphi))$. Since
\[
R^{q}f_{\ast}(K_{X}\otimes L\otimes\mathscr{I}(\varphi))
\]
is defined as the sheaf associated with the presheaf
\[
U\rightarrow H^{q}(f^{-1}(U),K_{X}\otimes L\otimes\mathscr{I}(\varphi)),
\]
the sheaf $f_{\ast}\mathcal{H}^{n,q}(L\otimes\mathscr{I}(\varphi))$ is isomorphic to $R^{q}f_{\ast}(K_{X}\otimes L\otimes\mathscr{I}(\varphi))$ by combing with Proposition \ref{p61}, (iii) and (the proof of) Theorem 5.2, (i) in \cite{Tak95}. Moreover, the whole argument is even valid for a collection of local singular metrics $\{f^{-1}(U),\varphi_{U}\}$ on $L$ associated to $\mathscr{I}(f,\|L\|)$ (see Sect.\ref{sec:asymptotic}). Let $\varphi$ denote the collection of $\{\varphi_{U}\}$ by abusing the notation. Remember that $\mathscr{I}(\varphi)$ is globally defined.

Then Proposition \ref{p32} is generalised as follows.
\begin{proposition}\label{p62}
Let $\varphi$ be the associated metric (see Sect.\ref{sec:asymptotic}) of $\mathscr{I}(f,\|L\|)$. Then
\[
R^{q}f_{\ast}(K_{X}\otimes L\otimes\mathscr{I}(\varphi))\simeq f_{\ast}\mathcal{H}^{n,q}(L\otimes\mathscr{I}(\varphi)).
\]
\end{proposition}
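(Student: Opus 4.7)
The plan is to prove the isomorphism at the level of presheaves on a Stein base for $Y$ and then pass to the associated sheaves. Recall that both $R^{q}f_{\ast}(K_{X}\otimes L\otimes\mathscr{I}(\varphi))$ and $f_{\ast}\mathcal{H}^{n,q}(L\otimes\mathscr{I}(\varphi))$ are, by construction, sheafifications of presheaves on $Y$ whose values on a Stein open $U$ are, respectively, $H^{q}(f^{-1}(U),K_{X}\otimes L\otimes\mathscr{I}(\varphi))$ and $\mathcal{H}^{n,q}(f^{-1}(U),L\otimes\mathscr{I}(\varphi),f^{\ast}r_{U})$. It is therefore enough to produce a natural isomorphism between these two presheaves when evaluated on a cofinal system of Stein opens, and to check compatibility with the restriction maps.

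First I would fix a finite Stein covering $\{(U,r_{U})\}$ of $Y$ with smooth strictly plurisubharmonic exhaustion functions $r_{U}$, so that $f^{\ast}r_{U}$ is a plurisubharmonic exhaustion function of $f^{-1}(U)$ satisfying the hypotheses on $r$ in Section 6.1. On each such piece the local harmonic theory applies to the pair $(f^{-1}(U),\varphi_{U})$, where $\varphi_{U}$ is the local singular metric associated to $\mathscr{I}(f,\|L\|)$ introduced in Section 2.1. Proposition \ref{p61}(iii) then produces a canonical isomorphism
\[
\mathcal{H}^{n,q}(f^{-1}(U),L\otimes\mathscr{I}(\varphi),f^{\ast}r_{U})\simeq H^{q}(f^{-1}(U),K_{X}\otimes L\otimes\mathscr{I}(\varphi)),
\]
and for a nested inclusion $U_{2}\subset U_{1}$ Proposition \ref{p61}(iv) supplies the commutative diagram expressing compatibility of the Hodge identification with restriction through the operators $S^{q}_{f^{-1}(U_{i})}$. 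This promotes the family of local Hodge isomorphisms to a morphism of presheaves on the Stein base of $Y$, which sheafifies to the claimed isomorphism of higher direct images.

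The main obstacle is that $\varphi$ is not a globally defined singular weight on $L$ but only the collection $\{\varphi_{U}\}$ arising from different local systems of generators of $\mu_{\ast}\mathcal{O}_{\tilde{X}}(-E)$, whereas Proposition \ref{p61} was phrased for a single global pseudo-effective metric. I therefore need to verify that: (a) the harmonic space on $f^{-1}(U)$ depends only on $\varphi_{U}$ up to locally bounded differences, so that a different admissible choice of local generators produces the same $\mathcal{H}^{n,q}$; (b) the regularising sequences $\{\varphi_{U,\varepsilon}\}$ furnished by Demailly's construction on $f^{-1}(U)$ continue to satisfy properties (a'), (b), (c), (d) of Section 3.1 with a pole locus contained in $Z\cap f^{-1}(U)$ for a closed analytic $Z\subset X$ independent of $\varepsilon$; and (c) on an overlap $f^{-1}(U_{2})\subset f^{-1}(U_{1})$ the two choices $\varphi_{U_{1}}$ and $\varphi_{U_{2}}$ yield the same harmonic space, so that the restriction map in Proposition \ref{p61}(iv) lands in the correct target. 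All three points reduce to the fact that two such local weights differ by a locally bounded function, so the weighted $L^{2}$ norms are comparable, $\mathscr{I}(\varphi_{U_{1}})=\mathscr{I}(\varphi_{U_{2}})$ on $f^{-1}(U_{2})$, and the boundary-free harmonic characterisation of Proposition \ref{p61}(i) is intrinsic to the local data. Combining this with the independence of the exhaustion function in Proposition \ref{p61}(ii), the presheaf morphism above is well-defined and the desired isomorphism of sheaves follows.
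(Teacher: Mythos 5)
Your proposal follows essentially the same route as the paper: both identify the two sheaves at the presheaf level over a Stein cover of $Y$, invoking Proposition \ref{p61}(iii) for the local Hodge isomorphism and Proposition \ref{p61}(iv) for compatibility with restrictions, and then sheafify. In fact you are somewhat more careful than the text, which dispatches the issue of $\varphi$ being only a collection of local weights $\{\varphi_{U}\}$ with a one-line remark, whereas you spell out why locally bounded differences between admissible local generators leave the harmonic spaces and the multiplier ideal unchanged.
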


\subsection{Injectivity theorem}
In this subsection, we should extend Theorem \ref{t12}.
\begin{theorem}\label{t61}
Let $L$ be a line bundle on $X$ with $\kappa(L,f)\geqslant0$. For a (non-zero) section $s$ of $L$, the multiplication map induced by the tensor product with $s$
\[
\Phi:R^{q}f_{\ast}(K_{X}\otimes L\otimes\mathscr{I}(f,\|L\|))\rightarrow R^{q}f_{\ast}(K_{X}\otimes L^{2}\otimes\mathscr{I}(f,\|L^{2}\|))
\]
is well-defined and injective for any $q\geqslant0$. In particular, $R^{q}f_{\ast}(K_{X}\otimes L\otimes\mathscr{I}(f,\|L\|))$ is torsion-free for every $q$.
\begin{proof}
Let $\{U,r_{U}\}$ be a finite Stein covering of $Y$ with smooth strictly plurisubharmonic exhaustion function $r_{U}$. From the discussion in Sect.\ref{sec:asymptotic}, there is a collection of (local) singular metrics $\varphi_{1}=\{f^{-1}(U),\varphi_{U,1}\}$ on $L$ and $\varphi_{2}=\{f^{-1}(U),\varphi_{U,2}\}$ on $L^{2}$ such that
\[
\mathscr{I}(f,\|L\|)=\mathscr{I}(\varphi_{1})
\]
and
\[
\mathscr{I}(f,\|L^{2}\|)=\mathscr{I}(\varphi_{2})
\]
respectively. In particular, it is a by-product of Proposition \ref{p21}, (vi), that $\varphi_{2}=2\varphi_{1}$, namely $\varphi_{U,2}=2\varphi_{U,1}$ for every $U$. Then in the view of Proposition \ref{p62} it is left to prove that
\[
f_{\ast}\mathcal{H}^{n,q}(L\otimes\mathscr{I}(\varphi_{1}))\rightarrow f_{\ast}\mathcal{H}^{n,q}(L^{2}\otimes\mathscr{I}(\varphi_{2}))
\]
is well-defined and injective.

Let $\alpha\in\mathcal{H}^{n,q}(f^{-1}(U),L\otimes\mathscr{I}(\varphi_{1}),f^{\ast}r_{U})$, and let $\{\varphi_{\varepsilon,1}\}$ be a regularising sequence of $\varphi_{1}$. Certainly this regularising sequence is interpreted that $\varphi_{\varepsilon,1}=\{f^{-1}(U),\varphi_{U,\varepsilon,1}\}$ and $\{\varphi_{U,\varepsilon,1}\}$ is a regularising sequence of $\varphi_{U,1}$ as in Sect.\ref{sec:forms}. Obviously $\{2\varphi_{\varepsilon,1}\}$ is a regularising sequence of $\varphi_{2}$. By definition, there exists a sequence $\{\alpha_{\varepsilon,l}\}$ such that $\alpha_{\varepsilon,l}\in\mathcal{H}^{n,q}_{\varphi_{\varepsilon}}(f^{-1}(U)-Z,L)$ and $\lim\alpha_{\varepsilon,l}=\alpha$ in the sense of $L^{2}$-topology. Apply formula (\ref{e62}) to $\alpha_{\varepsilon,l}$ on $f^{-1}(U)-Z$ and remember that $e(\partial r_{U})^{\ast}\alpha_{\varepsilon,l}=0$, we obtain
\[
\begin{split}
0=&\|\bar{\partial}\alpha_{l,\varepsilon}\|^{2}_{\varphi_{\varepsilon,1},\tilde{\omega}_{l}}+\|\bar{\partial}^{\ast}_{\varphi_{\varepsilon,1}}\alpha_{l,\varepsilon}\|^{2}_{\varphi_{\varepsilon,1},\tilde{\omega}_{l}}\\
=&\|\partial^{\ast}_{\varphi_{\varepsilon,1}}\alpha_{l,\varepsilon}\|^{2}_{\varphi_{\varepsilon,1},\tilde{\omega}_{l}}+<i[\Theta_{L,\varphi_{\varepsilon,1}},\Lambda]\alpha_{l,\varepsilon},\alpha_{l,\varepsilon}>_{\varphi_{\varepsilon,1},\tilde{\omega}_{l}}.
\end{split}
\]

Remember that $i\Theta_{L,\varphi_{U,1}}\geqslant0$. Thus, $\lim\partial_{\varphi_{\varepsilon,1}}^{\ast}\alpha_{\varepsilon,l}=0$ and
\[
\lim<i[\Theta_{L,\varphi_{\varepsilon,1}},\Lambda]\alpha_{\varepsilon,l},\alpha_{\varepsilon,l}>_{\varphi_{\varepsilon,1},\tilde{\omega}_{l}}=0.
\]

Now for an
\[
s\in H^{0}(X,L),
\]
we have $\bar{\partial}(s\alpha)=0$. Let $\mathfrak{a}$ be the base-ideal of $|L|$ relative to $f$, so $s\in\mathfrak{a}$. Then
\[
[s\alpha]\in H^{q}(f^{-1}(U),K_{X}\otimes L^{2}\otimes\mathscr{I}(\varphi_{2}))
\]
by Proposition \ref{p21}, (v). By Proposition \ref{p62}, there exists a sequence $\{\beta_{\varepsilon,l}\}$ on $f^{-1}(U)-Z$ such that $\Box_{\varepsilon,l}\beta_{\varepsilon,l}=0$ and $\beta_{\varepsilon,l}\in[s\alpha]$. It is left to prove that $\lim\beta_{\varepsilon,l}=(s\alpha)|_{f^{-1}(U)-Z}$ in the sense of $L^{2}$-topology. Indeed, since $\beta_{\varepsilon,l}\in[s\alpha]$, there exits an $L^{2}$-valued $(n,q-1)$-form $\gamma_{\varepsilon,l}$ such that $s\alpha=\beta_{\varepsilon,l}+\bar{\partial}\gamma_{\varepsilon,l}$ on $f^{-1}(U)-Z$. Since $\Box_{\varepsilon,l}\beta_{\varepsilon,l}=0$, $\bar{\partial}^{\ast}_{2\varphi_{\varepsilon,1}}\beta_{\varepsilon,l}=0$. Now apply the formula (\ref{e62}) on $f^{-1}(U)-Z$, we obtain that
\[
\begin{split}
&\lim\|\bar{\partial}^{\ast}_{\varphi_{2\varepsilon,1}}(s\alpha)\|^{2}_{2\varphi_{\varepsilon,1},\tilde{\omega}_{l}}\\
=&\lim(\|\partial^{\ast}_{2\varphi_{\varepsilon,1}}(s\alpha)\|^{2}_{2\varphi_{\varepsilon,1},\tilde{\omega}_{l}}+<i[\Theta_{L^{2},2\varphi_{\varepsilon,1}}, \Lambda](s\alpha),s\alpha>_{2\varphi_{\varepsilon,1},\tilde{\omega}_{l}}).
\end{split}
\]
Since $\partial^{\ast}_{2\varphi_{\varepsilon,1}}(s\alpha)=\ast\bar{\partial}\ast(s\alpha)=s\ast\bar{\partial}\ast\alpha=s\partial^{\ast}_{\varphi_{\varepsilon,1}}\alpha$,
\[
\begin{split}
\lim\|\partial^{\ast}_{2\varphi_{\varepsilon,1}}(s\alpha)\|^{2}_{2\varphi_{\varepsilon,1},\tilde{\omega}_{l}}&=\lim\|s\partial^{\ast}_{\varphi_{\varepsilon,1}}\alpha\|^{2}_{2\varphi_{\varepsilon,1},\tilde{\omega}_{l}}\\
&\leqslant\sup_{X}|s|^{2}e^{-\varphi_{3}}\lim\|\partial^{\ast}_{\varphi_{\varepsilon,1}}\alpha\|^{2}_{\varphi_{\varepsilon,1},\tilde{\omega}_{l}}\\
&=0.
\end{split}
\]
Here $\varphi_{3}$ is the singular metric on $L|_{f^{-1}(U)}$ defined by $\mathfrak{a}(f,|L|)$, and the inequality has been explained in the proof of Theorem \ref{t12}. It essentially follows from the fact that
\[
\mathfrak{a}(f,|L|)\cdot\mathscr{I}(f,\|L\|)\subseteq\mathscr{I}(f,\|L^{2}\|).
\]
Since $\sup_{X}|s|^{2}e^{-\varphi_{3}}$ is obviously bounded, we obtain that $\lim\partial^{\ast}_{2\varphi_{\varepsilon,1}}(s\alpha)=0$. Moreover,
\[
\begin{split}
0\leqslant&\lim<i[\Theta_{L^{2},2\varphi_{\varepsilon,1}},\Lambda](s\alpha),s\alpha>_{2\varphi_{\varepsilon,1},\tilde{\omega}_{l}}\\
\leqslant&\sup_{X}|s|^{2}e^{-\varphi_{3}}\lim<i[2\Theta_{L,\varphi_{\varepsilon,1}},\Lambda]\alpha,\alpha>_{\varphi_{\varepsilon,1},\tilde{\omega}_{l}}\\
=&0.
\end{split}
\]
We obtain that $\lim<i[\Theta_{L^{2},2\varphi_{\varepsilon,1}},\Lambda](s\alpha),s\alpha>_{2\varphi_{\varepsilon,1},\tilde{\omega}_{l}}=0$. Therefore,
\[
\lim\|\bar{\partial}^{\ast}_{2\varphi_{\varepsilon,1}}(s\alpha)\|^{2}_{2\varphi_{\varepsilon,1},\tilde{\omega}_{l}}=0.
\]

Then we have
\[
\begin{split}
&\lim\|\bar{\partial}^{\ast}_{2\varphi_{\varepsilon,1}}\bar{\partial}\gamma_{\varepsilon,l}\|^{2}_{2\varphi_{\varepsilon,1},\tilde{\omega}_{l}}\\
=&\lim\|\bar{\partial}^{\ast}_{2\varphi_{\varepsilon,1}}(s\alpha-\beta_{\varepsilon,l})\|^{2}_{2\varphi_{\varepsilon,1},\tilde{\omega}_{l}}\\
=&0.
\end{split}
\]
In other words, $\lim\bar{\partial}^{\ast}_{\varphi_{\varepsilon,2}}\bar{\partial}\gamma_{\varepsilon,l}=0$. Hence
\[
\lim\|\bar{\partial}\gamma_{\varepsilon,l}\|^{2}_{2\varphi_{\varepsilon,1},\tilde{\omega}_{l}}=\lim<\bar{\partial}^{\ast}_{2\varphi_{\varepsilon,1}}\bar{\partial}\gamma_{\varepsilon,l},\gamma_{\varepsilon,l}>_{2\varphi_{\varepsilon,1},\tilde{\omega}_{l}}=0.
\]
We conclude that $\lim\bar{\partial}\gamma_{\varepsilon,l}=0$. Equivalently, $\lim\beta_{\varepsilon,l}=s\alpha$ on $f^{-1}(U)-Z$. In summary,
\[
s\alpha\in\mathcal{H}^{n,q}(f^{-1}(U),L^{2}\otimes\mathscr{I}(\varphi_{2})).
\]
Then we have successfully proved that
\[
f_{\ast}\mathcal{H}^{n,q}(L\otimes\mathscr{I}(\varphi_{1}))\rightarrow f_{\ast}\mathcal{H}^{n,q}(L^{2}\otimes\mathscr{I}(\varphi_{2}))
\]
is well-defined. The injectivity is obvious.
\end{proof}
\end{theorem}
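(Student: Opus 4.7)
The plan is to adapt the proof of Theorem \ref{t12} to the relative setting by working chart-by-chart on a Stein covering of $Y$ and gluing. By Proposition \ref{p62}, the asserted sheaf map on higher direct images corresponds to a map
\[
f_{\ast}\mathcal{H}^{n,q}(L\otimes\mathscr{I}(\varphi_{1}))\longrightarrow f_{\ast}\mathcal{H}^{n,q}(L^{2}\otimes\mathscr{I}(\varphi_{2})),
\]
where $\varphi_{1},\varphi_{2}$ are the local singular metrics associated to $\mathscr{I}(f,\|L\|)$ and $\mathscr{I}(f,\|L^{2}\|)$. A decisive simplification over the absolute case is that Proposition \ref{p21}~(vi) yields $\varphi_{2}=2\varphi_{1}$ (with the same $p$ computing both ideals), and hence $i\Theta_{L^{2},\varphi_{2}}=2\,i\Theta_{L,\varphi_{1}}$. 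The curvature hypothesis appearing in Theorem \ref{t12} is therefore built in for free, with $\delta=1/2$.

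Fix a Stein chart $(U,r_{U})$ and set $V=f^{-1}(U)-Z$ with exhaustion $r=f^{\ast}r_{U}$; let $\{\varphi_{\varepsilon,1}\}$ regularise $\varphi_{1}$, so that $\{2\varphi_{\varepsilon,1}\}$ regularises $\varphi_{2}$. For $\alpha\in\mathcal{H}^{n,q}(f^{-1}(U),L\otimes\mathscr{I}(\varphi_{1}),r)$ take the approximating sequence $\alpha_{\varepsilon,l}$ satisfying $e(\bar{\partial}r)^{\ast}\alpha_{\varepsilon,l}=0$; feeding this into the local Bochner identity (\ref{e62}) with $\eta=1$, $\chi=0$, the boundary term drops out and we obtain in the limit the two identities
\[
\lim\partial^{\ast}_{\varphi_{\varepsilon,1}}\alpha_{\varepsilon,l}=0,\qquad \lim\langle i[\Theta_{L,\varphi_{\varepsilon,1}},\Lambda]\alpha_{\varepsilon,l},\alpha_{\varepsilon,l}\rangle_{\varphi_{\varepsilon,1},\tilde{\omega}_{l}}=0,
\]
exactly as in Proposition \ref{p31}. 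Since $s$ lies in the relative base-ideal $\mathfrak{a}(f,|L|)$, Proposition \ref{p21}~(v) places $s\alpha$ in $L^{2}$ with values in $\mathscr{I}(\varphi_{2})$, and by Proposition \ref{p62} its cohomology class admits a local harmonic representative $\beta_{\varepsilon,l}\in[s\alpha]$.

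The heart of the argument is to show $\beta_{\varepsilon,l}\to s\alpha$ in $L^{2}$. Writing $s\alpha=\beta_{\varepsilon,l}+\bar{\partial}\gamma_{\varepsilon,l}$ and using $\bar{\partial}^{\ast}_{2\varphi_{\varepsilon,1}}\beta_{\varepsilon,l}=0$, this reduces to $\lim\|\bar{\partial}^{\ast}_{2\varphi_{\varepsilon,1}}(s\alpha)\|=0$. Applying (\ref{e62}) to $s\alpha$ (again $e(\bar{\partial}r)^{\ast}(s\alpha)=s\cdot e(\bar{\partial}r)^{\ast}\alpha=0$), and exploiting the pointwise identities $\partial^{\ast}_{2\varphi_{\varepsilon,1}}(s\alpha)=s\,\partial^{\ast}_{\varphi_{\varepsilon,1}}\alpha$ and $\Theta_{L^{2},2\varphi_{\varepsilon,1}}=2\,\Theta_{L,\varphi_{\varepsilon,1}}$, both remaining terms are controlled by $\sup_{X}|s|^{2}e^{-\varphi_{3}}$ times the vanishing limits obtained in the previous paragraph, where $\varphi_{3}$ is the singular metric on $L|_{f^{-1}(U)}$ associated to $\mathfrak{a}(f,|L|)$. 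The requisite bound $\varphi_{1}+\varphi_{3}\leqslant\varphi_{2}+C$ is the analytic incarnation of the inclusion
\[
\mathfrak{a}(f,|L|)\cdot\mathscr{I}(f,\|L\|)\subseteq\mathscr{I}(f,\|L^{2}\|)
\]
from Proposition \ref{p21}~(v), computed on a common log-resolution exactly as in the proof of Theorem \ref{t12}. Injectivity is then immediate: if $s\alpha$ is $\bar{\partial}$-exact on $f^{-1}(U)$ then, being harmonic by what we just proved, it vanishes pointwise on $V$, and since $s\neq 0$ on a dense open set we obtain $\alpha\equiv 0$.

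The main obstacle is not analytic but sheaf-theoretic bookkeeping: one must verify that the boundary condition $e(\bar{\partial}r)^{\ast}\cdot=0$ is preserved under multiplication by $s$ on every chart, that the constructions on different charts are compatible with the restriction maps of Proposition \ref{p61}~(iv), and that the independence of the choice of exhaustion in Proposition \ref{p61}~(ii) is used to patch the local harmonic descriptions into a genuine morphism of sheaves $R^{q}f_{\ast}(K_{X}\otimes L\otimes\mathscr{I}(f,\|L\|))\to R^{q}f_{\ast}(K_{X}\otimes L^{2}\otimes\mathscr{I}(f,\|L^{2}\|))$. Once these compatibilities are laid out, the argument follows the template of Theorem \ref{t12} step by step, and torsion-freeness falls out because $s$ can be chosen so as not to vanish on any prescribed point of $Y$.
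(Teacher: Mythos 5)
Your proposal follows essentially the same route as the paper's proof: reduction via Proposition \ref{p62} to the map on harmonic sheaves, the identification $\varphi_{2}=2\varphi_{1}$ from Proposition \ref{p21}~(vi) supplying the curvature hypothesis for free, the local Bochner identity (\ref{e62}) with the boundary condition $e(\bar{\partial}r)^{\ast}\alpha=0$, and the estimate of $\partial^{\ast}_{2\varphi_{\varepsilon,1}}(s\alpha)$ and the curvature term by $\sup_{X}|s|^{2}e^{-\varphi_{3}}$ using $\mathfrak{a}(f,|L|)\cdot\mathscr{I}(f,\|L\|)\subseteq\mathscr{I}(f,\|L^{2}\|)$. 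The argument is correct and matches the paper's step by step, with your closing remarks on injectivity and chart compatibility merely spelling out what the paper leaves implicit.
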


\subsection{Asymptotic estimate and vanishing theorem}
\label{sec:relative}
In this subsection, we should extend Theorem \ref{t13}.

\begin{theorem}\label{t62}
Let $L$ be a pseudo-effective line bundle on $X$. Then for any coherent sheaf $\mathcal{G}$ and $q\geqslant0$, we have
\[
    \textrm{rank }R^{q}f_{\ast}(L^{k}\otimes\mathcal{G}\otimes\mathscr{I}(\|L^{k}\|))=O(k^{l-q}).
\]
\begin{proof}
Apply Theorem \ref{t13} on the general fibre, we then obtain the desired result.
\end{proof}
\end{theorem}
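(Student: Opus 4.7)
The plan is to reduce the assertion on the relative direct image to the absolute asymptotic estimate (Theorem \ref{t13}) applied fibrewise on a general fibre $F$ of $f$, whose dimension is $l$. The rank of a coherent sheaf on $Y$ is computed at a generic point, and for a morphism between projective manifolds with generically smooth fibres, cohomology and base change (or semi-continuity together with generic flatness) identifies the generic rank of $R^{q}f_{\ast}\mathcal{F}$ with $\dim H^{q}(F,\mathcal{F}|_{F})$ for $F$ a sufficiently general fibre.

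Concretely, I would first pick a Zariski open $U\subseteq Y$ over which $f$ is smooth, $\mathcal{G}$ is flat, and the fibres $F_{y}=f^{-1}(y)$ are smooth projective manifolds of dimension $l$ on which $L|_{F_{y}}$ is pseudo-effective (with $\kappa(L|_{F_{y}})\geqslant 0$, coming from the hypothesis in the application to Theorem \ref{t15}, or handled trivially otherwise since then both sides vanish). After shrinking $U$ one has the base-change identification
\[
 \bigl(R^{q}f_{\ast}(L^{k}\otimes\mathcal{G}\otimes\mathscr{I}(\|L^{k}\|))\bigr)_{y}\otimes k(y)\;\cong\;H^{q}\bigl(F_{y},\,L^{k}|_{F_{y}}\otimes\mathcal{G}|_{F_{y}}\otimes\mathscr{I}(\|L^{k}\|)|_{F_{y}}\bigr)
\]
for generic $y\in U$, so the generic rank equals the right-hand dimension.

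Next I would invoke the restriction property of the asymptotic multiplier ideal sheaf: on a general fibre $F=F_{y}$ one has
\[
 \mathscr{I}(\|L^{k}\|)|_{F}\;\subseteq\;\mathscr{I}(\|L^{k}|_{F}\|),
\]
which is a standard consequence of the restriction theorem for (asymptotic) multiplier ideals (Laz04b, \S 11.2); this turns the right-hand space above into a subspace (after the inclusion is applied on the coefficient sheaf) of cohomology groups on the $l$-dimensional projective manifold $F$ with coefficients in $L^{k}|_{F}\otimes\mathcal{G}|_{F}\otimes\mathscr{I}(\|L^{k}|_{F}\|)$. Now Theorem \ref{t13}, applied to the projective manifold $F$ of dimension $l$, the pseudo-effective line bundle $L|_{F}$ and the coherent sheaf $\mathcal{G}|_{F}$, yields
\[
 h^{q}\bigl(F,\,L^{k}|_{F}\otimes\mathcal{G}|_{F}\otimes\mathscr{I}(\|L^{k}|_{F}\|)\bigr)\;=\;O(k^{l-q}),
\]
which combined with the above inequalities gives the bound $O(k^{l-q})$ on the generic rank.

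The main obstacle, and the point at which care is needed, is the passage from the absolute asymptotic multiplier ideal $\mathscr{I}(\|L^{k}\|)$ on $X$ to the corresponding object $\mathscr{I}(\|L^{k}|_{F}\|)$ on the general fibre; this requires the restriction theorem for asymptotic multiplier ideals together with verification that the loci of bad behaviour (jumping of multiplier ideals, failure of flatness of $\mathcal{G}$, singularities of $f$) lie over a proper Zariski closed subset of $Y$, so that shrinking $U$ keeps all the base-change statements valid. Once this is in place, the $O(k^{l-q})$ bound on the generic rank, together with coherence of $R^{q}f_{\ast}$, delivers the claim.
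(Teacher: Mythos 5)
Your overall strategy is the paper's own: the published proof is the single sentence ``apply Theorem \ref{t13} on the general fibre,'' and your reduction of the generic rank of $R^{q}f_{\ast}$ to a fibrewise $h^{q}$ via generic flatness and cohomology and base change is the right way to make that sentence precise. The problem is the step where you pass from $\mathscr{I}(\|L^{k}\|)|_{F}$ to $\mathscr{I}(\|L^{k}|_{F}\|)$. From the inclusion of sheaves $\mathscr{I}(\|L^{k}\|)|_{F}\subseteq\mathscr{I}(\|L^{k}|_{F}\|)$ you conclude that $H^{q}$ of the smaller coefficient sheaf is a \emph{subspace} of $H^{q}$ of the larger one. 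An inclusion of coherent sheaves induces an injection on $H^{0}$ but not on $H^{q}$ for $q\geqslant 1$ (already $\mathcal{O}_{\mathbb{P}^{1}}(-2)\subseteq\mathcal{O}_{\mathbb{P}^{1}}$ fails this), and $q\geqslant 1$ is exactly the range the theorem is about. So the inequality between $h^{q}\bigl(F,L^{k}|_{F}\otimes\mathcal{G}|_{F}\otimes\mathscr{I}(\|L^{k}\|)|_{F}\bigr)$ and $h^{q}\bigl(F,L^{k}|_{F}\otimes\mathcal{G}|_{F}\otimes\mathscr{I}(\|L^{k}|_{F}\|)\bigr)$ does not follow, and your appeal to Theorem \ref{t13} on $F$ is left hanging precisely where it is needed.

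The repair is to avoid changing the ideal sheaf. Either (a) use that restriction to a \emph{general} fibre preserves multiplier ideals on the nose, $\mathscr{I}(\varphi_{k})|_{F}=\mathscr{I}(\varphi_{k}|_{F})$ where $\varphi_{k}$ is the metric computing $\mathscr{I}(\|L^{k}\|)$ (the Ohsawa--Takegoshi inclusion plus a Fubini-type argument gives equality off a proper closed subset of $Y$ for each $k$), or (b) observe that the lemma of Matsumura quoted in Section 4 does not require the ideals $\mathcal{I}_{k}$ to be the asymptotic multiplier ideals of $L^{k}|_{F}$: it only needs the vanishing $H^{q}(F,A^{m}\otimes\mathcal{G}|_{F}\otimes L^{k}|_{F}\otimes\mathcal{I}_{k})=0$, which Theorem \ref{t41} supplies on $F$ for $\mathcal{I}_{k}=\mathscr{I}(\varphi_{k}|_{F})$, since $\varphi_{k}|_{F}$ is still pseudo-effective on a general fibre. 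Either route yields the $O(k^{l-q})$ bound for the sheaf actually appearing in the statement. A second, smaller point: the dense open set of good fibres depends on $k$, so to invoke the asymptotic estimate on a single fixed $F$ you should take $F$ over a very general point of $Y$ (a countable intersection of dense opens) and use upper semicontinuity to bound the generic rank by the value there; ``shrinking $U$'' once is not enough.
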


In the end, we prove Theorem \ref{t15}.

\begin{proof}[Proof of Theorem \ref{t15}]
It is trivial when $\kappa(L,f)=-\infty$.

When $\kappa(L,f)\geqslant0$, we use the same argument as before. Firstly, we claim that if
\[
R^{q}f_{\ast}(X,K_{X}\otimes L\otimes\mathscr{I}(\|L\|))
\]
is non-zero,
\[
\textrm{rank }f_{\ast}L^{k-1}\leqslant\textrm{rank }R^{q}f_{\ast}(X,K_{X}\otimes L^{k}\otimes\mathscr{I}(\|L^{k}\|)).
\]
In fact, let $\{s_{j}\}$ be a local basis of $f_{\ast}L^{k-1}$. Then for any local section
\[
\alpha\in R^{q}f_{\ast}(X,K_{X}\otimes L\otimes\mathscr{I}(\|L\|)),
\]
$\{s_{j}\alpha\}$ is linearly independent in $R^{q}f_{\ast}(X,K_{X}\otimes L^{k}\otimes\mathscr{I}(\|L^{k}\|))$ by Theorem \ref{t61}. It leads to the inequality.

Now suppose that $R^{q}f_{\ast}(X,K_{X}\otimes L\otimes\mathscr{I}(\|L\|))$ is non-zero for $q>l-\kappa(L,f)$. We have
\[
  \textrm{rank }f_{\ast}L^{k-1}=\textrm{rank }f_{\ast}(L^{k-1}\otimes\mathscr{I}(\|L^{k-1}\|))\leqslant\textrm{rank }R^{q}f_{\ast}(X,K_{X}\otimes L^{k}\otimes\mathscr{I}(\|L^{k}\|)).
\]
The first equality comes from the Proposition \ref{p21}, (iv), and the second inequality is due to the claim. By the definition of relative Iitaka dimension (see Sect.\ref{sec:iitaka}), we have
\[
  \limsup_{k\rightarrow\infty}\frac{\textrm{rank }f_{\ast}L^{k-1}}{(k-1)^{\kappa(L,f)}}>0.
\]
It means that
\[
  \limsup_{k\rightarrow\infty}\frac{\textrm{rank }R^{q}f_{\ast}(X,K_{X}\otimes L^{k}\otimes\mathscr{I}(\|L^{k}\|))}{(k-1)^{\kappa(L,f)}}>0.
\]
On the other hand, we have
\[
  \textrm{rank }R^{q}f_{\ast}(L^{k}\otimes\mathcal{G}\otimes\mathscr{I}(\|L^{k}\|))=O(k^{l-q})
\]
by Theorem \ref{t62}, so $l-q\geqslant\kappa(L,f)$. It contradicts to the fact that $q>l-\kappa(L,f)$. Hence
\[
R^{q}f_{\ast}(X,K_{X}\otimes L\otimes\mathscr{I}(\|L\|))=0
\]
for $q>l-\kappa(L,f)$.
\end{proof}

\begin{acknowledgements}
The author wants to thank Prof. Jixiang Fu for his suggestion and encouragement.
\end{acknowledgements}

\end{document}